\newcommand{\R}{\mathbb{R}}
\newcommand{\id}{\mathrm{Id}}
\newcommand{\OP}{\operatorname}
\newcommand{\pt}{\operatorname{pt}}
\theoremstyle{plain}
\newtheorem{thm}{Theorem}[section]
\newtheorem{cor}[thm]{Corollary}
\newtheorem{lem}[thm]{Lemma}
\newtheorem{mainthm}{Theorem}
\theoremstyle{definition}
\theoremstyle{remark}
\newtheorem{rem}[thm]{Remark}
\numberwithin{equation}{section}
\subjclass[2010]{53D10}
\keywords{Legendrian submanifolds, $C^0$-limits, positive loops}
\title[$C^0$-Legendrians and positive loops]{$C^0$-limits of Legendrians and positive loops}
\author{Georgios Dimitroglou Rizell}
\email{georgios.dimitroglou@math.uu.se}
\address{Department of Mathematics\\
Uppsala University\\
Box 480\\
SE-751 06 Uppsala\\
Sweden}
\author{Michael G. Sullivan}
\email{mikesullivan@umass.edu}
\address{Department of Mathematics and Statistics\\
University of Massachusetts\\
Amherst\\
MA 01003\\
USA}
\thanks{The first author is supported by the Knut and Alice Wallenberg Foundation under the grants KAW 2021.0191 and KAW 2021.0300, and by the Swedish Research Council under the grant number 2020-04426. The second author is supported by the Simons Foundation grant number 708337.
The authors thank Lukas Nakamura for helpful comments and conversations related to Remarks \ref{rem:relax2} and \ref{rem:NonLegDisp}, as well as the anonymous referee for providing significant feedback in their thorough review.}
\begin{document}

\begin{abstract}
We show that the image of a properly embedded  Legendrian submanifold under a homeomorphism that is the $C^0$-limit of a sequence of contactomorphisms supported in some fixed compact subset  is again Legendrian, if the image of the submanifold is smooth.
In proving this, we show that any  closed  non-Legendrian submanifold of a contact manifold admits a positive loop and we provide a parametric refinement of the Rosen--Zhang result on the degeneracy of the Chekanov--Hofer--Shelukhin pseudo-norm for  properly embedded  non-Legendrians.
\end{abstract}

\maketitle

\section{Terminology and notation}

Let $(M, \xi)$ be a $(2n+1)$-dimensional, possibly non-compact, contact manifold with contact distribution $\xi \subset TM.$  We will assume $M$ to be co-orientable and so we can chose a contact 1-form $\alpha$ where $\xi = \ker\{\alpha\}.$ 
We will denote by $\Lambda \subset M$  a  connected properly embedded (not necessarily closed) 
 {\bf Legendrian} (submanifold), which means $\dim(\Lambda) = n$ and $T\Lambda \subset \xi.$ 
We will denote by $K \subset M$  a connected properly embedded smooth submanifold with $\dim(K)  \le n.$ Usually $K$ will be a {\bf non-Legendrian} connected properly embedded submanifold, which means 
either $\dim(K)  < n,$ or $\dim(K) = n$ and there
 there exists $x \in K$ such that $T_xK \not \subset  \xi_x.$ 
 We will sometimes consider these non-Legendrians (and Legendrians) as parameterized; i.e., $K$ (and $\Lambda$) is equipped with an embedding into $M.$ One canonical example is the inclusion $\id_M|_K: K \rightarrow K \subset M.$ Also for any contactomorphism $\Phi \in \OP{Cont}(M,\xi)$ such that $\Phi(K) = K$ and $\Phi|_K \ne \id_M|_K,$ $\Phi|_K$ is another parameterization of $K.$

Recall that the contact isotopies $\Phi^t \colon M \to M$ of a co-orientable contact manifold are in bijective correspondence with the time-dependent smooth functions, so-called contact Hamiltonians $H_t \colon M \to \R$. Note that this bijection depends on the choice of contact one-form $\alpha$. Given a contact isotopy we can recover the contact Hamiltonian by
$$\alpha(\dot\Phi^t)=H_t \circ \Phi^t.$$
Conversely, a contact Hamiltonian determines an isotopy via the equation
$$\alpha(\dot\Phi^t)=H_t \circ \Phi^t, \:\: d \alpha(\dot\Phi^t, \cdot)|_{\ker \alpha}= 
 -dH_t|_{\ker \alpha}$$
for each $t$. We say that $H_t$ {\bf generates} the contact isotopy $\Phi^t$. The contact isotopy generated by $H_t \equiv 1$ is called the {\bf Reeb flow of $\alpha$}.

Note that the \emph{sign} of the contact Hamiltonian at a point only depends on the choice of co-orientation. We say that a (parameterized) submanifold $A \subset (M,\xi)$ {\bf admits a positive loop} if there exists a contact isotopy $\Phi^t$ for which $\Phi^1(A)=A$ (resp.~$\Phi^1 \circ \id_A =\id_A$) for which the generating contact Hamiltonian $H_t$ satisfies $H_t(x) > 0$ for all $x \in \Phi^t(A)$ and $t$. We similarly say that $A$ admits a {\bf somewhere positive non-negative loop} if $H_t(x) \ge 0$ for all $x \in \Phi^t(A)$ and $t$, where the latter inequality moreover is strict for some point $x$ and time $t$. Note that a contact Hamiltonian that vanishes along a  properly embedded  Legendrian submanifold induces a flow that fixes that Legendrian. Furthermore, any somewhere positive non-negative loop of a  closed   Legendrian can be generated by a contact Hamiltonian that is non-negative on the entire ambient manifold $M$.

 In this article all contactomorphisms, homeomorphisms, and isotopies are implicitly assumed to have support contained inside some fixed compact subset,  even though the ambient contact manifold $M$ and the connected properly embedded submanifolds $\Lambda,K$  sometimes  need not be compact.

\section{Statements of results}

In this section, we describe a number of results contrasting flexibility versus rigidity, for Legendrians (loose or not) and non-Legendrians: $C^0$-limits, positive loops, and pseudo-metrics. 

\subsection{$C^0$-topology}

We start with our main result, which is about Legendrians under homeomorphisms that are $C^0$-limits of contactomorphisms.
\begin{mainthm}
\label{thm:C0}
      Consider a sequence $$\Phi_k \colon (M,\xi) \xrightarrow{\cong} (M,\xi)$$ of contactomorphisms supported in a fixed compact set,  and let $\Lambda \subset M$ be a  properly embedded  Legendrian. If $\Phi_k \xrightarrow{C^0} \Phi_\infty$ where $\Phi_\infty$ is a homeomorphism, and if $\Phi_\infty(\Lambda)$ is smooth, then $\Phi_\infty(\Lambda)$ is also Legendrian.    \end{mainthm}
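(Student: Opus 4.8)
The plan is to argue by contradiction: assume that $K := \Phi_\infty(\Lambda)$ is smooth but \emph{not} Legendrian, and derive a contradiction with the contact-geometric rigidity of the genuine Legendrian $\Lambda$. Since each $\Phi_k$ is supported in the fixed compact set and $\Phi_k \xrightarrow{C^0}\Phi_\infty$, the locus where $K$ fails to be Legendrian is confined to that compact set, and the Legendrians $\Lambda_k := \Phi_k(\Lambda)$ converge to $K$ in the $C^0$ (uniform, equivalently Hausdorff) sense. The overarching idea is that the \emph{flexibility} of non-Legendrians, in the guise of the positive-loop construction, can be transported backwards through the $\Phi_k$ to contradict the \emph{rigidity} of $\Lambda$.

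First I would invoke the flexibility input. Because $K$ is a properly embedded non-Legendrian, the positive-loop theorem, in the parametric form for proper embeddings furnished by the refinement of the Rosen--Zhang result on degeneracy of the Chekanov--Hofer--Shelukhin pseudo-norm, supplies for every $\varepsilon>0$ a compactly supported contact isotopy of pseudo-norm less than $\varepsilon$ that displaces $K$ off itself (equivalently, in the loop formulation, a somewhere positive non-negative loop of $K$ of arbitrarily small energy). This is precisely the degeneracy of the pseudo-norm on non-Legendrians, and it is exactly the step that fails for honest Legendrians.

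Next I would transport this back to $\Lambda$. Displacement, and the strict sign of the generating contact Hamiltonian along a submanifold, are \emph{$C^0$-open} conditions, so for $k$ large the same isotopy displaces $\Lambda_k$; conjugating by the contactomorphism $\Phi_k$ then yields a contact isotopy displacing $\Lambda$, where one uses that conjugation by a contactomorphism rescales the generating contact Hamiltonian by the \emph{positive} conformal factor and hence preserves both its sign and the displacement property. In the loop formulation the analogous conjugation produces a positive contact isotopy of $\Lambda$ whose time-one image converges in $C^0$ to $\Phi_\infty^{-1}(K)=\Lambda$ as $k\to\infty$. Either way one exhibits $\Lambda$ as displaceable, or nearly returnable to itself by a positive isotopy, at arbitrarily small cost.

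The main obstacle, and the reason the statement is delicate, is that the Chekanov--Hofer--Shelukhin energy is \emph{not} invariant under conjugation by the $\Phi_k$: the conformal factors may blow up inside the fixed compact support set, so cheap displacements of $K$ need not remain cheap after pulling back to $\Lambda$, and one cannot naively transfer the positive lower bound on the displacement energy of $\Lambda$ to the whole sequence $\Lambda_k$. Overcoming this is precisely the role of the \emph{parametric} refinement of the Rosen--Zhang degeneracy: it must produce the small-energy displacements (or small-energy almost-loops) uniformly along the converging family $\Lambda_k\to K$, compatibly with the proper and possibly non-compact embedding, so that a limiting diagonal argument extracts an honest contradiction with the non-degeneracy of the pseudo-norm on the Legendrian $\Lambda$. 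Closing up the almost-loops and controlling the $C^0$-limit of the conjugated isotopies are the technical points I expect to demand the most care.
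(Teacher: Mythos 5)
There is a genuine gap, and it sits exactly at the point you flag as ``the main obstacle'': your mechanism for turning flexibility of $K$ into a contradiction does not close. Small-energy displacement of $\Lambda_k=\Phi_k(\Lambda)$ is, by itself, no contradiction: non-degeneracy of the Chekanov--Hofer--Shelukhin pseudo-norm gives each Legendrian $\Lambda_k$ only a \emph{$k$-dependent} positive displacement energy, which may tend to $0$ as $\Lambda_k\to K$ (indeed it must, since the limit is non-Legendrian). To extract a contradiction you must transfer the bound to the fixed $\Lambda$, and conjugating by $\Phi_k^{-1}$ rescales the contact Hamiltonian by $e^{f_k}$ where $\Phi_k^*\alpha=e^{f_k}\alpha$; the conformal factors $f_k$ are completely uncontrolled here --- controlling them is precisely the extra hypothesis of Rosen--Zhang and Usher that Theorem~\ref{thm:C0} removes. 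Your proposed remedy, that the ``parametric refinement'' supplies the needed uniformity along the family $\Lambda_k\to K$, is not something that refinement provides: Theorem~\ref{thm:ZeroHamiltonian} and Corollary~\ref{cor:ParameterizedCHS} concern Hamiltonians vanishing along the image of $K$ and the parametrized pseudo-metric; they say nothing about conjugation-invariance or uniform energy bounds. Similarly, in your loop formulation, a positive isotopy of $\Lambda$ whose time-one image merely $C^0$-converges back to $\Lambda$ contradicts nothing --- every Legendrian admits positive paths (short Reeb flow); what is forbidden (Theorem~\ref{thm:NoPositive}) is an honest somewhere-positive non-negative \emph{loop}, suitably localized, and ``closing up the almost-loops,'' which you defer as a technical point, is the whole difficulty.

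The paper's proof dissolves both problems by never conjugating and never quantifying energy. Theorem~\ref{thm:NonLegPosLoop} produces a positive loop of the non-Legendrian $K$ with two crucial extras, inherited from the first and third bullets of Theorem~\ref{thm:ZeroHamiltonian}: the time-one map is the \emph{identity on a whole neighborhood} $V$ of $K$, and the Hamiltonian is positive on $\Phi^t(U)$ for a neighborhood $U\subset V$ of $K$ (openness of strict positivity, the one $C^0$-openness point you did identify correctly). Then for $k\gg 0$ one has $\Phi_k(\Lambda)\subset U$, so the \emph{same} isotopy is already an honest positive loop of the Legendrian $\Phi_k(\Lambda)$, supported in a one-jet neighborhood, contradicting Colin--Ferrand--Pushkar/Chernov--Nemirovski --- a rigidity statement whose proof involves no pseudo-norms and hence no conformal factors. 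Your outline also omits the reduction from properly embedded $\Lambda$ to closed $\Lambda$ (the paper glues $J^1$ of a doubled $\Lambda\cap U$ to $\operatorname{int}U$ to compactify the Legendrian), which is needed because the positive-loop theorem is stated for closed non-Legendrians. As written, your argument reproduces the partial results that required conformal-factor control rather than the full theorem.
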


Partial results of Theorem \ref{thm:C0} have appeared elsewhere: Nakamura assumed that there was a uniform lower bound on the length of Reeb chords, as well as some small technical conditions \cite[Theorem 3.4]{Nakamura:C0}; Rosen and Zhang assumed $C^0$-convergence for the smooth  $f_k: M \rightarrow \R$ defined by $\Phi_k^* \alpha = e^{f_k} \alpha$ (often called conformal factors) \cite[Theorem 1.4]{RosenZhang}; Usher relaxed Rosen and Zhang's hypothesis to certain lower bounds on the conformal factors \cite[Theorem 1.2]{Usher:C0}; we proved the general case in dimension 3 \cite[Theorem D]{DRS3}; and Stoki\'{c} made no assumptions in \cite[Proposition 6.1]{Stokic}, but concluded the limiting submanifold could not be nearly Reeb-invariant \cite[Definition 1.3]{Stokic}. Stoki\'{c} showed that not being nearly Reeb-invariant implies being Legendrian in the case when $\dim(\Lambda)=1$ (in higher dimensions we do not know if the analogous result is true). Some of these results assumed the Legendrians were compact.

In \cite[Theorem D]{DRS3} we proved that $\Lambda$ and $\Phi_\infty(\Lambda)$ are contactomorphic Legendrians when $\dim(\Lambda) = 1.$ This equivalence, and many other weaker connections, are still unknown for $\dim(\Lambda) > 1.$  For example, if $\Phi_\infty(\Lambda)$ is loose, then need $\Lambda$ be loose as well? 
We only know the following.
\begin{thm}
  \label{thm:squeezing}
Consider the set-up as in Theorem \ref{thm:C0}.  Assume $\Lambda$ is closed, $\dim(\Lambda) >1$ and  $k \gg 0$.
  \begin{enumerate}
  \item There exists a standard one-jet neighborhood $\Lambda \subset U$ such that the Legendrian  $\Phi_\infty(\Lambda)$  is contained in the one-jet neighborhood $\Phi_k(U).$ Furthermore, inside the one-jet neighborhood $\Phi_k(U)$, the Legendrian $\Phi_k(\Lambda)$ can be squeezed  into a one-jet neighborhood of  $\Phi_\infty(\Lambda)$ and  $\Phi_\infty(\Lambda)$ can be squeezed into a one-jet neighborhood of $\Phi_k(\Lambda), $ in the sense of \cite[ Section 1.2]{DRS20}.
  \item 
  \begin{enumerate}  
  \item
    If $\Lambda$ and $\Phi_\infty(\Lambda)$ are diffeomorphic, then  $\Phi_k(\Lambda)$ and $\Phi_\infty(\Lambda)$  are smoothly isotopic  inside $\Phi_k(U)$.
     \item $\Phi_\infty(\Lambda)$ is not loose inside the one-jet neighborhood $\Phi_k(U).$ 
    \end{enumerate}
  \item  Suppose  some stabilization   $(M \times T^*X, \ker\{\alpha +pdq\})$ of $M$   admits an open contact embedding  into $J^1 \R^N$ and   the  Legendrian  $\Lambda \times {\mathbf{0}_{T^*X}}$  admits a Chekanov--Eliashberg DGA-augmentation as a Legendrian in $J^1 \R^N,$ then $\Phi_\infty(\Lambda)$ is not loose in $M$. (See \cite[Section 1]{DRS20} for a review of Chekanov--Eliashberg differential graded algebras and augmentations in this context.)
  \end{enumerate}
\end{thm}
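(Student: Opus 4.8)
The plan is to realize $\Phi_k(U)$ as a one-jet neighborhood in which $\Phi_k(\Lambda)$ is the zero section, and then to exploit the conformally contact scaling of such a neighborhood together with the convergence $\Phi_k \to \Phi_\infty$; the three items are tiered by how much holomorphic-curve input they require. For item (1), I would first fix a Weinstein one-jet neighborhood $\Lambda \subset U$, so that $\Phi_k(U)$ is canonically a one-jet neighborhood of $\Phi_k(\Lambda)$ for every $k$. Since all maps are supported in a fixed compact set and $\Phi_\infty$ is a homeomorphism, the uniform convergence $\Phi_k \to \Phi_\infty$ forces the uniform convergence $\Phi_k^{-1} \to \Phi_\infty^{-1}$ of the inverses on that compact set; as $\Lambda$ is closed, $\Phi_\infty(\Lambda)$ is compact and $\Phi_k^{-1}(\Phi_\infty(\Lambda)) \to \Phi_\infty^{-1}(\Phi_\infty(\Lambda)) = \Lambda$ in the Hausdorff metric, so $\Phi_k^{-1}(\Phi_\infty(\Lambda)) \subset U$ for $k \gg 0$, which is the asserted inclusion $\Phi_\infty(\Lambda) \subset \Phi_k(U)$. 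For the squeezing, I would identify $\Phi_k(U) \cong J^1(\Lambda)$ with $\Phi_k(\Lambda)$ the zero section and use the scaling $s_\lambda(q,p,z) = (q, \lambda p, \lambda z)$, which satisfies $s_\lambda^*\alpha = \lambda\,\alpha$ and contracts every compact subset toward the zero section as $\lambda \to 0^+$; this squeezes $\Phi_\infty(\Lambda)$ into any one-jet neighborhood of $\Phi_k(\Lambda)$. For the reverse direction, $\Phi_\infty(\Lambda)$ is Legendrian by Theorem~\ref{thm:C0} and closed, hence has its own standard neighborhood $V$; since $\Phi_k(\Lambda) \to \Phi_\infty(\Lambda)$ in the Hausdorff metric we get $\Phi_k(\Lambda) \subset V$ for $k \gg 0$, and the scaling of $V$ centered at $\Phi_\infty(\Lambda)$, extended by a cutoff to a compactly supported contact isotopy, squeezes $\Phi_k(\Lambda)$ toward $\Phi_\infty(\Lambda)$, in the sense of \cite[Section 1.2]{DRS20}.

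For item (2), the diffeomorphism hypothesis in (a) is what places $\Phi_\infty(\Lambda)$ in graphical position: for $k \gg 0$ it lies in an arbitrarily thin tube around the zero section $\Phi_k(\Lambda)$, the bundle projection $J^1(\Lambda) \to \Lambda$ restricted to $\Phi_\infty(\Lambda)$ has degree one (being $C^0$-close to the identity on $\Phi_k(\Lambda)$), and the diffeomorphism hypothesis together with the Legendrian condition should upgrade this to a diffeomorphism, exhibiting $\Phi_\infty(\Lambda)$ as a section; scaling a section to the zero section through $s_\lambda$ is then a smooth isotopy inside $\Phi_k(U)$. For (b), the zero section $\Phi_k(\Lambda)$ carries the tautological (zero) generating family and hence a canonical augmentation of its Chekanov--Eliashberg algebra computed inside $J^1(\Lambda)$, so it is not loose in $\Phi_k(U)$, since it is classical that loose Legendrians admit no augmentation. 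I would transfer this rigidity across the mutual squeezing of item (1): the squeezing of $\Phi_k(\Lambda)$ onto $\Phi_\infty(\Lambda)$ yields an exact Lagrangian cobordism from $\Phi_k(\Lambda)$ to $\Phi_\infty(\Lambda)$ in the symplectization of $\Phi_k(U)$, whose induced unital DGA morphism pulls the augmentation back to one of $\Phi_\infty(\Lambda)$; as loose Legendrians have none, $\Phi_\infty(\Lambda)$ is not loose in $\Phi_k(U)$.

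For item (3), I would argue by contradiction. Were $\Phi_\infty(\Lambda)$ loose in $M$, its stabilization $\Phi_\infty(\Lambda) \times \mathbf{0}_{T^*X}$ would be loose in $M \times T^*X$, and hence loose in $J^1\R^N$ through the given open contact embedding. On the other hand, the hypothesis supplies an augmentation of $\Lambda \times \mathbf{0}_{T^*X}$ in $J^1\R^N$; since $\Phi_k$ is an honest contactomorphism, $\Phi_k(\Lambda) \times \mathbf{0}_{T^*X}$ inherits one, and the stabilized squeezing cobordism of item (2b) pulls it back to an augmentation of $\Phi_\infty(\Lambda) \times \mathbf{0}_{T^*X}$ in $J^1\R^N$, contradicting looseness. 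Hence $\Phi_\infty(\Lambda)$ is not loose in $M$. Note that this is genuinely stronger than (2b), since non-looseness in the small neighborhood $\Phi_k(U)$ does not by itself preclude a loose chart using the extra room of $M$, which is exactly what the global augmentation hypothesis rules out.

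I expect the main obstacle to be the passage from the soft, $C^0$-level squeezing of item (1) to the rigid algebra of items (2b) and (3): namely, organizing the squeezing isotopies into genuine exact Lagrangian cobordisms whose DGA morphisms preserve the existence of augmentations, while knowing about $\Phi_\infty(\Lambda)$ only its Hausdorff closeness to $\Phi_k(\Lambda)$ together with, via Theorem~\ref{thm:C0}, its smoothness and the Legendrian condition. Reconciling this meager $C^0$-input with the holomorphic-curve counts defining the invariants is the crux; by comparison, the inverse-convergence argument in item (1) and the graphicality argument in (2a) are routine.
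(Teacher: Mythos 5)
Your item (1) is essentially the paper's argument and is fine (one small omission: a squeezing in the sense of \cite[Section 1.2]{DRS20} requires the fiber-wise projection to be of \emph{degree one}, which you should extract from the $C^0$-convergence, as you in fact do later in (2a)). But (2a) contains a genuine gap: being contained in an arbitrarily thin one-jet neighborhood of the zero-section $\Phi_k(\Lambda)$ with a degree-one projection does \emph{not} ``upgrade to a diffeomorphism,'' i.e.\ does not exhibit $\Phi_\infty(\Lambda)$ as a section. A Legendrian that is $C^0$-close to the zero-section can have a front with many sheets; for instance, Murphy-stabilized (loose) Legendrians diffeomorphic to $\Lambda$ live in arbitrarily small one-jet neighborhoods of the zero-section and are nowhere graphical. (If your graphicality claim were true, a loose Legendrian could never be $C^0$-close to the zero-section and part (2b) would be nearly vacuous.) The paper's route is softer and avoids this entirely: sufficiently $C^0$-close maps are homotopic, so $\Phi_k|_\Lambda$ and $\Phi_\infty|_\Lambda$ are homotopic inside $\Phi_k(U)$, and for closed $n$-manifolds in a $(2n{+}1)$-manifold with $n>1$, diffeomorphic plus homotopic implies smoothly isotopic by Haefliger \cite{Plongements}.

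The crux gap is in (2b) and (3): the ``exact Lagrangian cobordism from $\Phi_k(\Lambda)$ to $\Phi_\infty(\Lambda)$'' that you want to extract from the squeezing does not exist as stated. A squeezing is a degree-one fiber-wise projection, not a Legendrian isotopy, and $\Phi_k(\Lambda)$ and $\Phi_\infty(\Lambda)$ are not known to be Legendrian isotopic --- the paper explicitly flags even weaker comparisons (e.g.\ whether looseness transfers) as open. Transferring an augmentation across a mere squeezing is precisely the hard content of \cite[Theorem 1.7]{DRS20}, whose hypotheses require the ambient manifold to be (an open subset of) $J^1\R^N$; this is exactly why item (3) carries the stabilization-and-embedding hypothesis, and why the paper does \emph{not} argue via augmentation transfer for (2b). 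Instead, (2b) is proved by a different mechanism: stabilize so that $U \cong J^1(\Lambda \times S^1)$, use Murphy's h-principle \cite{LooseLeg} to contact-isotope any loose Legendrian into a small one-jet neighborhood of $j^1 0 \times Z_{\OP{st}}$, which is \emph{horizontally displaceable}; horizontal displaceability is open under $C^0$-perturbation and inherited under squeezing by fiber scaling, so if the zero-section could be squeezed into a one-jet neighborhood of a loose Legendrian it would be horizontally displaceable --- contradicting the non-acyclicity of its Rabinowitz Floer complex (Lemma \ref{lem:displacement}). Your closing diagnosis that reconciling the $C^0$-level squeezing with holomorphic-curve invariants is the crux is accurate; but the resolution is not to build cobordisms (they need not exist), it is either to invoke the black box \cite[Theorem 1.7]{DRS20} in the $J^1\R^N$ setting, as the paper does verbatim for (3), or to sidestep the algebra transfer via the displaceability argument, as the paper does for (2b).
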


  \begin{rem}
\label{rem:relax1}  
The assumption in Theorems \ref{thm:C0} and \ref{thm:squeezing} that $\Phi_\infty$ is a homeomorphism can be dropped if we instead assume $\Phi_\infty(U)$ is a neighborhood of $\Phi_\infty(\Lambda)$ for some standard Legendrian one-jet neighborhood $\Lambda \subset U$. See  the proof of Theorem \ref{thm:C0}  and then  use of Theorem \ref{thm:squeezing}.1 in the rest of the proof of Theorem \ref{thm:squeezing}.


\end{rem}

\subsection{Positive loops}

The strategy of the proof of Theorem \ref{thm:C0} is inspired by Stoki\'{c}'s proof of \cite[Proposition 6.1]{Stokic}. However, instead of producing a Reeb-invariant neighborhoods of arbitrary non-Legendrians (it is unclear if they always exist), we show that non-Legendrians admit positive loops in Theorem \ref{thm:NonLegPosLoop}. We then allude to the classical theorem of non-existence of $C^0$-small positive loops of Legendrians proven by Colin--Ferrand--Pushkar \cite{Colin:Positive} (see Theorem \ref{thm:NoPositive}).

In order to produce small positive loops of any non-Legendrian, we first prove the following flexibility when it comes to the choice of contact Hamiltonian for a contact isotopy of any  non-Legendrian submanifold.

 \begin{mainthm}
    \label{thm:ZeroHamiltonian}
    Let $K \subset (M^{2n+1},\xi)$ be   a properly embedded  non-Legendrian  and $\Phi^t \colon M \to M$ be a  contact isotopy. There exists a  contact isotopy $\Psi^t \colon M \to M$  such that the following hold.
    \begin{itemize}
                \item $\Psi^t$ is generated by a contact Hamiltonian $H_t$ that vanishes when restricted to $\Psi^t(K)$;
               \item $\Psi^t(K)$ is contained inside an $\epsilon$-neighborhood of $\Phi^t(K)$ for an arbitrary choice of $\epsilon>0$ and all $t \in [0,1]$; and
            \item $\Psi^1$ and $\Phi^1$ agree in a small neighborhood of $K.$
  \end{itemize}
  \end{mainthm}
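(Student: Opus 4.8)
The plan is to realize the prescribed motion of $K$ by a contact isotopy whose velocity along $K$ is everywhere tangent to $\xi$, and to quantify exactly when this is possible. Writing $X_t$ for the contact vector field generating $\Psi^t$, the requirement $H_t|_{\Psi^t(K)}=0$ is equivalent to $X_t|_{\Psi^t(K)}\subset\xi$, since $H_t=\alpha(X_t)$. Thus each point of $K$ must trace a \emph{horizontal} (tangent to $\xi$) path, while the target isotopy $\Phi^t$ moves points with a generally nonzero Reeb component $\alpha(\dot\Phi^t)=H_t\circ\Phi^t$. The key geometric input is that, although horizontal paths have velocity in $\xi$, by Chow--Rashevskii bracket-generation they can accumulate net displacement in \emph{any} direction, including the Reeb direction, while staying $C^0$-close to a prescribed path. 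This is precisely why the non-Legendrian hypothesis is indispensable: as already noted in the text, a Hamiltonian vanishing along a Legendrian generates a flow fixing it, so for Legendrian $K$ the horizontal motions cannot move $K$ at all, whereas for non-Legendrian $K$ they can approximate arbitrary motions.

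I would first record the infinitesimal realizability criterion. Given an embedded non-Legendrian $N=\Psi^t(K)$ and a $\xi$-valued vector field $W$ along $N$, there is a contact Hamiltonian $\tilde H$ with $\tilde H|_N=0$ and $X_{\tilde H}|_N=W$ \emph{if and only if} $d\alpha(W,v)=0$ for all $v\in T_xN\cap\xi_x$ and all $x\in N$; indeed $\tilde H|_N=0$ forces $d\tilde H|_{TN}=0$, while the $\xi$-component of $X_{\tilde H}$ along $N$ is prescribed by the otherwise free data $d\tilde H|_\xi$ via $d\alpha(X_{\tilde H},\cdot)|_\xi=-d\tilde H|_\xi$, subject only to this constraint. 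The admissible velocities thus form the $d\alpha$-orthogonal $(TN\cap\xi)^{\perp}\subset\xi$, whose dimension is $2n-\dim(TN\cap\xi)$; a short computation shows that for a non-Legendrian $N$ this space is strictly larger than $TN$ and in particular contains vectors genuinely moving $N$, whereas for a Legendrian it collapses to $TN$ (a Lagrangian is its own $d\alpha$-orthogonal). I would then check, in a local contact normal form around the trajectory and using the non-Legendrian condition to choose coordinates, that iterated Jacobi brackets $\{\tilde H,\tilde H'\}$ of such admissible Hamiltonians are \emph{not} forced to vanish on $N$, so that their flows acquire net displacement in the obstructed, Reeb-type normal directions; a zig-zag of four short admissible flows then produces a $C^0$-small wiggle with prescribed net normal displacement $\sim\{\tilde H,\tilde H'\}$.

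With this local model in hand, I would subdivide $[0,1]$ into short intervals on which $\Phi^t$ is $C^1$-close to the identity, approximate the small prescribed motion on each interval by a concatenation of admissible horizontal flows via the bracket scheme, and smooth the concatenation into a genuine contact isotopy whose generating Hamiltonian vanishes on the moving $K$ throughout; keeping the wiggles fine controls the $C^0$-distance to $\Phi^t(K)$ below any preassigned $\epsilon$, and the freedom in $\tilde H$ off $K$ lets one globalize the local fields to compactly supported Hamiltonians. The final and, I expect, most delicate point is the relative endpoint condition $\Psi^1=\Phi^1$ near $K$: naive wiggling reaches the correct endpoint only approximately, so I would upgrade approximate to exact reachability using the openness of the horizontal endpoint map (again Chow--Rashevskii) and then arrange the correction to be supported in a shrinking neighborhood of the trajectory with trivial time-$1$ holonomy near $K$, so that the two contactomorphisms agree there. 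The main obstacle is carrying out this bracket-generation \emph{coherently and parametrically} over all of $K$ at once, as an ambient contact isotopy rather than pointwise, while simultaneously keeping the Hamiltonian exactly zero on $\Psi^t(K)$, staying $\epsilon$-close, and matching $\Phi^1$ exactly near $K$; the danger zone is near points where $T_xK$ is nearly contained in $\xi_x$, where the admissible cone $(TN\cap\xi)^{\perp}$ degenerates toward the non-movable Legendrian case and the wiggles must be taken correspondingly finer.
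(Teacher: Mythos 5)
Your infinitesimal criterion is correct and matches the paper's starting point: a $\xi$-valued field $W$ along $N$ is realizable by a Hamiltonian vanishing on $N$ iff $W\in(T_xN\cap\xi_x)^{d\alpha}$, and the paper's first case indeed displaces $K$ at a non-Lagrangian-tangency point by choosing $X_H\in (T_pK\cap\xi_p)^{d\alpha}\setminus T_pK$ (case 1 of Lemma \ref{lem:displace1}, following Rosen--Zhang). But the engine of your proof --- Chow--Rashevskii bracket generation carried out parametrically along $K$ --- is asserted rather than proved, and at the crucial degenerate points it is actually unavailable. Let $\mathcal{L}(K)\subset K$ be the locus where $T_xK\subset\xi_x$ is Lagrangian, and take $p\in\OP{int}\mathcal{L}(K)$; this interior can be nonempty for a non-Legendrian $K$ (perturb a Legendrian only outside a smaller disk). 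Along the Legendrian piece $L\subset K$ near $p$, \emph{every} admissible Hamiltonian is forced to have contact vector field tangent to $L$: $H|_L=0$ gives $X_H|_L\in\xi$ and $d\alpha(X_H,v)=-dH(v)=0$ for $v\in T_xL$, so $X_H|_L\in(T_xL)^{d\alpha}=T_xL$. Since all admissible fields are tangent to $L$ along $L$, all iterated Lie brackets evaluated at $p$ remain tangent to $L$; bracket generation fails identically there, and no zig-zag of admissible flows, however fine, produces transverse displacement near $p$. Your proposed remedy (``correspondingly finer wiggles'' in the danger zone) therefore cannot work. What is needed is a \emph{macroscopic tangential transport}: the paper (case 2.2 of Lemma \ref{lem:displace1}) first slides $p$ within $\mathcal{L}(K)$ out to $\OP{bd}\mathcal{L}(K)$ by a contact isotopy preserving $K$ set-wise with Hamiltonian vanishing on $K$ --- using connectedness of $K$ and $\mathcal{L}(K)\neq K$ --- and only then displaces at a nearby non-Lagrangian point (case 2.1). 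This global step is absent from your scheme.

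Moreover, the paper's architecture bypasses your two hardest steps entirely. It never approximates the trajectory $\Phi^t(K)$ by horizontal motions (which would require a Chow--Rashevskii-type theorem on a space of embeddings that you do not supply): after fragmenting $\Phi^t$ via Banyaga into pieces that are $C^\infty$-small and supported near single points of $K$, and concatenating (Lemma \ref{lem:LocalTheoremB}), it runs the displacement trick of Lemma \ref{lem:DisplacementTrick} --- push $K$ off the small support $U$ of $\Phi^t$ by a vanishing-Hamiltonian isotopy $\widetilde{\Phi}^t$, let $\Phi^1$ act ``for free'' while $K$ is out of the way, and bring $K$ back by conjugating $\widetilde{\Phi}^t$ with $\Phi^1\circ(\widetilde{\Phi}^1)^{-1}$, whose generating Hamiltonian still vanishes along the relevant image by Lemma \ref{lem:conjugation}. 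The exact identity $\Psi^1|_V=\Phi^1|_V$ then drops out of an algebraic cancellation, whereas your appeal to ``openness of the horizontal endpoint map'' would at best match endpoints of point trajectories approximately and says nothing about making two ambient contactomorphisms agree on a neighborhood of $K$. Likewise the $\epsilon$-closeness comes for free from the smallness of supports, not from fineness of wiggles. In short: your infinitesimal lemma is sound, but the proof's core mechanism is both unestablished in the parametric setting and genuinely obstructed on $\OP{int}\mathcal{L}(K)$, so the argument as proposed does not close.
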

In contrast, a contact Hamiltonian that vanishes along a properly embedded  Legendrian submanifold generates a contact isotopy that fixes the Legendrian set-wise.
  
  We continue by establishing some consequences of Theorem \ref{thm:ZeroHamiltonian}, starting with the existence of positive loops.

By Theorem \ref{thm:ZeroHamiltonian} there is a flexibility in the choice of contact Hamiltonian for a contact isotopy of a non-Legendrian submanifold; namely, it shows that we can $C^0$-deform the isotopy to one whose generating contact Hamiltonian vanishes along the image of the submanifold. The following direct consequence shows that there also is flexibility for the behavior of contact isotopies that are  positive  along the Legendrian.

 \begin{mainthm}
    \label{thm:NonLegPosLoop}

Any closed non-Legendrian $K \subset (M^{2n+1},\xi)$ sits in a positive loop.
Equivalently, there exists a contact isotopy  $\Phi^t$ with 
generating Hamiltonian $H_t$ such that the following hold.
     \begin{itemize}
       \item 
         $\Phi^0|_V=\Phi^1|_V=\id_V$
         is satisfied in some neighborhood $V$ of  $K.$
       \item $H_t$  is  positive on the image of $K$ under $\Phi^t$ for all $t \in [0,1].$
         \end{itemize}
       \end{mainthm}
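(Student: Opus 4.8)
The plan is to deduce Theorem~\ref{thm:NonLegPosLoop} as an almost immediate consequence of Theorem~\ref{thm:ZeroHamiltonian}. The point is that producing a positive loop of an arbitrary non-Legendrian is hard if one insists on controlling the contact Hamiltonian everywhere, but Theorem~\ref{thm:ZeroHamiltonian} gives us precisely the flexibility to massage any contact isotopy into one whose generating Hamiltonian vanishes along the moving submanifold. So my strategy is: first exhibit \emph{some} contact isotopy $\Phi^t$ that returns $K$ to itself after time one and has a generating Hamiltonian that is positive along $\Phi^t(K)$ — not worrying about the $C^0$-size or the behavior away from $K$ — and then invoke Theorem~\ref{thm:ZeroHamiltonian} to upgrade this into a genuine \emph{loop} fixing a neighborhood $V$ of $K$ pointwise.

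First I would construct the preliminary positive isotopy. Since $K$ is non-Legendrian, at some point $x\in K$ we have $T_xK \not\subset \xi_x$ (or $\dim K < n$, in which case $K$ has a nonempty Legendrian-transverse locus too). The cleanest way to get positivity along $K$ is to use the Reeb flow, which is generated by $H_t\equiv 1$ and is therefore positive everywhere; the obstacle is that the Reeb flow need not bring $K$ back to itself. To fix this, I would instead combine a short Reeb push-off with its reverse, or more conceptually, observe that \emph{any} isotopy that starts and ends at $K$ can be made positive along $K$ after a suitable time-reparametrization plus the addition of a large positive constant to the Hamiltonian. Concretely, take the constant isotopy (which trivially satisfies $\Phi^1(K)=K$) and perturb its Hamiltonian by adding a large positive function; because the constant isotopy's Hamiltonian is identically zero, adding any strictly positive function $c>0$ on a neighborhood of $\bigcup_t \Phi^t(K)=K$ makes the Hamiltonian positive along $K$, though of course it now generates a nontrivial flow. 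The key realization is that we do not need the \emph{positive} preliminary isotopy to be a loop: Theorem~\ref{thm:ZeroHamiltonian} will restore the loop condition.

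The main step is then the application of Theorem~\ref{thm:ZeroHamiltonian}. Given the preliminary contact isotopy $\Phi^t$ with $\Phi^1(K)=K$, Theorem~\ref{thm:ZeroHamiltonian} produces a $C^0$-close contact isotopy $\Psi^t$ whose generating Hamiltonian $G_t$ vanishes along $\Psi^t(K)$ and which agrees with $\Phi^1$ near $K$. Now I would form the \emph{difference} or concatenation $\Phi^t \circ (\Psi^t)^{-1}$, or equivalently consider the contact isotopy obtained by following $\Phi^t$ and then running $\Psi^t$ backwards; by the composition rule for contact Hamiltonians, its generating Hamiltonian along the relevant orbit is the difference of the two Hamiltonians. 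Because $G_t\equiv 0$ along $\Psi^t(K)$ while $\Phi^t$'s Hamiltonian was strictly positive there, the difference is strictly positive along the moving image of $K$; and because $\Psi^1$ and $\Phi^1$ agree near $K$, the composite is the identity on a neighborhood $V$ of $K$ at times $0$ and $1$. This yields exactly a contact isotopy that is a loop of $K$ (indeed identity near $K$ at the endpoints) with generating Hamiltonian positive on $\Phi^t(K)$, which is the desired positive loop.

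The hard part is entirely absorbed into Theorem~\ref{thm:ZeroHamiltonian}; the remaining difficulty here is bookkeeping with the composition formula for contact Hamiltonians under the choice of contact form $\alpha$, since the Hamiltonian of a composition involves the conformal factors $e^{f_t}$ and is not simply additive. I would need to verify that the positivity of the preliminary Hamiltonian survives this twisting — which it does, since $e^{f_t}>0$ always preserves sign — and that the subtraction of the vanishing Hamiltonian $G_t$ genuinely leaves a strictly positive quantity along the orbit rather than merely a non-negative one. The strictness is guaranteed by choosing the additive constant $c$ in the preliminary step bounded below by a positive number on the compact set swept out by $K$. I would also confirm that the resulting loop can be taken arbitrarily $C^0$-small if desired, which follows from the $\epsilon$-neighborhood control in Theorem~\ref{thm:ZeroHamiltonian} together with taking $c$ small; this smallness is what makes the loop useful for contradicting the Colin--Ferrand--Pushkar non-existence result for Legendrians later in the argument.
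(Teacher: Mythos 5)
Your overall strategy---run an auxiliary Reeb-like flow $\Phi^t$ near $K$, apply Theorem~\ref{thm:ZeroHamiltonian} to obtain a shadowing isotopy $\Psi^t$ whose Hamiltonian vanishes along $\Psi^t(K)$ and whose time-one map agrees with $\Phi^1$ near $K$, then combine the two---is exactly the paper's, but your choice of sign and composition order breaks both required properties. In the composition formula of Lemma~\ref{lem:inversehamiltonian}, the Hamiltonian of $\Phi_0^t \circ \Phi_1^t$ is $H^0_t + e^{f_t \circ (\Phi_0^t)^{-1}} H^1_t \circ (\Phi_0^t)^{-1}$, so along the image $\Phi_0^t(\Phi_1^t(K))$ the \emph{inner} Hamiltonian $H^1_t$ is evaluated precisely on $\Phi_1^t(K)$. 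To exploit the vanishing of $\Psi$'s Hamiltonian along $\Psi^t(K)$, the isotopy $\Psi^t$ must therefore occupy the inner slot. Your composite $\Phi^t \circ (\Psi^t)^{-1}$ has $(\Psi^t)^{-1}$ inside; its Hamiltonian $-e^{f_t\circ \Psi^t}H^1_t\circ \Psi^t$ vanishes along the \emph{fixed} set $K$, whereas the formula evaluates it along $(\Psi^t)^{-1}(K) \neq K$, so the ``difference of the two Hamiltonians'' you invoke is not (positive term)${}+{}$(zero) along the moving image of $K$, and strict positivity is not established. The loop condition also fails for this order: from $\Psi^1=\Phi^1$ on a neighborhood $W$ of $K$ you only get $\Phi^1\circ(\Psi^1)^{-1}=\id$ on $\Phi^1(W)$, a neighborhood of $\Phi^1(K)$, not of $K$ (your preliminary flow genuinely displaces $K$). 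The alternative reading of your construction---concatenate $\Phi^t$ with the time-reversal of $\Psi^t$---does close up near $K$, since $(\Psi^1)^{-1}\circ\Phi^1 = \id$ on $W$; but by Lemma~\ref{lem:backwards} the Hamiltonian of the backward half vanishes \emph{identically} along the moving image of $K$ (at backward time $s$ that image is exactly $\Psi^{1-s}(K)$), so you obtain only a somewhere positive non-negative loop, not the strictly positive loop the theorem asserts. No choice of the constant $c$ repairs either defect: the failure is structural, not quantitative. (A somewhere positive non-negative loop would incidentally still suffice against Theorem~\ref{thm:NoPositive} in the proof of Theorem~\ref{thm:C0}, but it does not prove the statement as written.)

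The paper resolves the sign/order issue by running the preliminary flow \emph{backwards}: take $\Phi^t$ generated by an autonomous Hamiltonian equal to the negative constant $-\epsilon$ on a large neighborhood $U \supset K$ (a rescaled negative Reeb flow), apply Theorem~\ref{thm:ZeroHamiltonian} to this $\Phi^t$, arrange $\OP{supp}(\Psi^t) \subset U$, and take the loop to be $(\Phi^t)^{-1}\circ \Psi^t$. Then $\Psi^t$ sits in the inner slot, so the inner term of the composite Hamiltonian vanishes along the image of $K$; the outer term is the Hamiltonian of $(\Phi^t)^{-1}$, which by the second part of Lemma~\ref{lem:inversehamiltonian} is \emph{positive} on $U$ (the conformal factor $e^{f_t}>0$ preserves the sign, as you correctly note); and $(\Phi^1)^{-1}\circ \Psi^1 = \id$ near $K$ follows from the third bullet of Theorem~\ref{thm:ZeroHamiltonian}. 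Your argument becomes correct after exactly this modification, and your final observation about making the loop $C^0$-small for the application is consistent with how the paper uses it.
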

 

       \begin{rem}
          With the ideas behind Theorems \ref{thm:ZeroHamiltonian} and \ref{thm:NonLegPosLoop}, it should be possible to prove that any contact isotopy of a  closed  non-Legendrian can be $C^0$-perturbed to an isotopy for which the contact Hamiltonian is $C^0$-close to any arbitrary function.
        
         \end{rem}

Consider a closed Legendrian $\Lambda \subset M$ that is loose in the sense of Murphy \cite{LooseLeg}.  Since a non-Legendrian push-off $K$ of $\Lambda$ admits a contractible positive loop, and since $\Lambda$ can be squashed  onto $K$ by a contact isotopy (which automatically preserves the positivity) in the sense of \cite{DRS4}, 
 Theorem \ref{thm:NonLegPosLoop} gives a new proof of the following result by Liu:
\begin{cor}[\cite{Liu:Positive}]
\label{cor:Liu}
    Any closed loose Legendrian admits a contractible positive loop. 
  \end{cor}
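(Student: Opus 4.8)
The plan is to manufacture the contractible positive loop of a loose $\Lambda$ by importing one from a $C^0$-nearby non-Legendrian, exactly along the lines sketched before the statement. Work in a standard Legendrian jet neighborhood $\Lambda\subset J^1\Lambda\subset M$ with contact form $\alpha = dz-p\,dq$, and let $K$ be the push-off $\{(q,\delta\, s(q),0)\}$ of the zero section, where $\delta>0$ is small and $s$ is any one-form on $\Lambda$ that is not identically zero. A direct computation gives $\alpha|_{TK}=-\delta\, s\neq 0$ wherever $s\neq0$, so for every small $\delta$ this $K$ is a closed \emph{non}-Legendrian; it is moreover connected, diffeomorphic to $\Lambda$, $C^0$-close to $\Lambda$, and (letting $\delta\to0$) smoothly and formally Legendrian isotopic to $\Lambda$. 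Applying Theorem \ref{thm:NonLegPosLoop} to $K$ produces a contact isotopy $\Phi^t$ with $\Phi^0=\id$ and $\Phi^1|_V=\id_V$ on a neighborhood $V\supset K$, whose generating Hamiltonian $H_t$ is positive along $\Phi^t(K)$ for all $t\in[0,1]$.

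The first point to secure is that this loop of $K$ is contractible. I would extract from the proof of Theorem \ref{thm:NonLegPosLoop} (through Theorem \ref{thm:ZeroHamiltonian}, cf.\ the remark following it) that $\Phi^t$ arises as a controlled deformation of the constant isotopy: that there is an ambient homotopy $\Phi^t_s$ of contact isotopies, through based loops fixing $V$ at $t\in\{0,1\}$, from $\Phi^t_0\equiv\id$ to $\Phi^t_1=\Phi^t$. Since this homotopy is ambient, applying the whole family $\Phi^t_s$ to any Legendrian contained in $V$ contracts the corresponding loop of Legendrian embeddings; positivity plays no role in the contraction, only in the loop itself.

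Finally I would squash $\Lambda$ onto $K$ and transport. Because $\Lambda$ is loose, Murphy's $h$-principle \cite{LooseLeg} together with the squashing construction of \cite{DRS4} yields a contact isotopy $\Theta^s$, with $\Theta:=\Theta^1$, carrying $\Lambda$ to a Legendrian $\Lambda'=\Theta(\Lambda)$ inside an arbitrarily small neighborhood of $K$. Taking $\Lambda'$ close enough to $K$ forces $\Lambda'\subset V$, whence $\Phi^1(\Lambda')=\Lambda'$; and since $\Phi^t(K)$ lies in the open set $\{H_t>0\}$ with a uniform positive margin (by compactness of $K$ and of $[0,1]$), also $\Phi^t(\Lambda')\subset\{H_t>0\}$. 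Thus $t\mapsto\Phi^t|_{\Lambda'}$ is a positive loop of $\Lambda'$, and it is contractible by the previous paragraph applied to $\Lambda'\subset V$. Conjugating by the fixed contactomorphism $\Theta$, i.e.\ passing to the positive loop $t\mapsto \Theta^{-1}\circ\Phi^t\circ\Theta|_\Lambda$, transports it to $\Lambda$: positivity survives because co-orientation-preserving contactomorphisms have positive conformal factor, and contractibility survives because pre- and post-composition by the fixed maps $\Theta,\Theta^{-1}$ are homeomorphisms of the relevant embedding spaces. This is the sought contractible positive loop of $\Lambda$, reproving \cite{Liu:Positive}.

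The main obstacle is entirely the contractibility claim of the middle paragraph: positivity transports for free from the uniform positive margin, and the reduction from $K$ to $\Lambda'$ and then to $\Lambda$ is formal, but one must verify that the positive loop furnished by Theorem \ref{thm:NonLegPosLoop} is genuinely a controlled deformation of the trivial loop \emph{through based loops}, so that the contraction is ambient and can be applied verbatim to the nearby Legendrian $\Lambda'$.
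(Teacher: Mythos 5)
Your proposal follows the same route as the paper: the paper's proof of Corollary \ref{cor:Liu} is precisely the sketch preceding its statement --- take a non-Legendrian push-off $K$ of the loose $\Lambda$, apply Theorem \ref{thm:NonLegPosLoop} to obtain a (claimed contractible) positive loop of $K$, squash $\Lambda$ onto $K$ by a contact isotopy in the sense of \cite{DRS4}, and note that transport by a contactomorphism preserves positivity (this is Lemma \ref{lem:conjugation}: the conjugated isotopy is generated by $e^{f\circ\Psi^{-1}}H_t\circ\Psi^{-1}$, and the conformal factor is positive). Your explicit push-off $\{(q,\delta\,s(q),0)\}$, your uniform-positivity-margin argument for a Legendrian $\Lambda'$ squashed into $V$ (the same device the paper uses in the proof of Theorem \ref{thm:C0}, where a smaller $U \subset V$ is chosen with $H_t|_{\Phi^t(U)}>0$), and the final conjugation by $\Theta$ are all correct and consistent with the paper.

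The genuine gap is exactly the one you flag yourself: contractibility. You assert that the proof of Theorem \ref{thm:NonLegPosLoop} can be upgraded to an ambient homotopy $\Phi^t_s$ through loops based on $V$, but nothing in the proofs of Theorems \ref{thm:ZeroHamiltonian} or \ref{thm:NonLegPosLoop} supplies such a family: the construction produces the single loop $(\Phi^t)^{-1}\circ\Psi^t$ (with $\Phi^t$ the flow of $\epsilon\rho$ and $\Psi^t$ from Theorem \ref{thm:ZeroHamiltonian}), assembled via fragmentation and concatenation, with no contracting $s$-parameter; moreover Theorem \ref{thm:ZeroHamiltonian} asserts no continuity of $\Psi^t$ in the input isotopy, which you would need if you tried to contract by scaling $\epsilon \to 0$, and truncation fails too, since $(\Phi^s)^{-1}\circ\Psi^s$ is \emph{not} the identity near $K$ for intermediate $s$ (the agreement $\Psi^1=\Phi^1$ holds near $K$ only at time one). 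To be fair, the paper is equally terse on this point --- it simply asserts that the non-Legendrian push-off admits a \emph{contractible} positive loop --- so your write-up is at parity with the published sketch; but as a self-contained proof of Liu's statement the contraction is the missing content, and it is the genuinely nontrivial part: contractibility through loops of Legendrian embeddings is where the parametric h-principle input of \cite{LooseLeg} and the squashing machinery of \cite{DRS4} (or Liu's original argument) do real work, and it does not follow formally from the loop being generated by an ambient isotopy that is the identity near $K$ at its endpoints.
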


 This flexibility of non-Legendrians and loose Legendrians contrasts to the rigidity of certain non-loose  Legendrians. 
Colin, Ferrand and Pushkar used generating functions to prove  the non-existence of positive loops for the zero-section in a 1-jet space $0_N \subset (J^1N,\xi_{st})$ of a closed manifold $N$ in \cite[Theorem 1]{Colin:Positive}; in the case of $N=S^n$ the result was obtained independently by Chernov and Nemirovski in  \cite{Cher_Nem}. The latter authors generalized the result to non-negative isotopies of $0_N \subset (J^1N,\xi_{st})$:
\begin{thm}[Corollary 5.5 in \cite{ChernovNemirovski}]
\label{thm:NoPositive}
 The zero-section in the one-jet space $0_N \subset (J^1N,\xi_{st})$ of a (not necessarily closed) manifold $N$ does not admit a somewhere positive non-negative loop supported in a compact subset.
\end{thm}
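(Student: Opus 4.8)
The plan is to prove the contrapositive via the monotonicity of generating-function spectral invariants, which is the engine behind the Colin--Ferrand--Pushkar non-existence result \cite{Colin:Positive}, and to isolate the refinement to \emph{somewhere positive} non-negative loops as the essential new difficulty. First I would reduce to the case of closed $N$: since the loop is supported in a compact subset of $J^1N$, it is the identity outside $J^1(C)$ for some compact $C\subset N$, and after enlarging and capping off $C$ to a closed manifold $\bar N$ and extending the isotopy by the identity, the loop is realized inside $0_{\bar N}\subset J^1\bar N$. So it suffices to treat closed $N$. There, each Legendrian $L_t:=\Phi^t(0_N)$, being contact-isotopic to the zero section with compact support, admits a generating function $F_t\colon N\times\R^{k}\to\R$ quadratic at infinity, chosen to depend continuously on $t$ with $F_0$ and $F_1$ equal to the same nondegenerate quadratic form generating $0_N$.

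The genuine critical points of $F_t$ correspond exactly to the points of $L_t\cap\{p=0\}$, and their critical values are the $z$-coordinates of those points; for each class $u\in H_*(N)$ the min--max value $c(u,F_t)$ is such a critical value, with $c(u,F_0)=c(u,F_1)=0$. The core step is monotonicity. At the wall point $x_u(t)\in L_t\cap\{p=0\}$ realizing $c(u,F_t)$, the envelope (Danskin) principle gives $\tfrac{d}{dt}c(u,F_t)=\partial_tF_t(x_u(t))=H_t(x_u(t))$, the normalization being the Reeb flow $H_t\equiv 1$, for which $L_t=\{p=0,\,z=t\}$ and $c(u,F_t)=t$. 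Since $x_u(t)\in L_t$ and $H_t\ge 0$ on $L_t$, each $c(u,F_t)$ is non-decreasing, and the loop condition forces $c(u,F_t)\equiv 0$ for all $u$ and all $t$.

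If the loop were \emph{everywhere} positive on $L_t$ this already yields a contradiction: $H_t>0$ at the point realizing the minimal invariant $c([\pt],F_t)$ would make it strictly increasing, contradicting $c([\pt],F_0)=c([\pt],F_1)$. This recovers the Colin--Ferrand--Pushkar theorem, and shows that any non-negative loop evading the contradiction must have $H_t$ vanishing at all wall points of $L_t$ for every $t$.

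The hard part is precisely the gap between \emph{everywhere positive} and \emph{somewhere positive}. The spectral values $c(u,F_t)$ only record the heights of points of $L_t$ on the wall $\{p=0\}$, so positivity of $H_t$ at a point $x_0\in L_{t_0}$ with $p(x_0)\ne 0$, or at a homologically inessential Reeb chord, is invisible to them and does not immediately break constancy. The task is therefore to show that such off-wall (or inessential) positivity cannot persist around a \emph{closed} loop without eventually moving an essential critical value: intuitively, a point pushed up and off the wall must be returned to $0_N\subset\{p=0\}$ to close the loop, and the only way to lower its $z$-coordinate while keeping $H_t\ge 0$ is through the $p\,dq$-term, which would create a new wall crossing of negative height and drive some $c(u,F_t)$ below $0$, contradicting monotonicity. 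Making this propagation rigorous is the crux; one route is a persistence-barcode analysis of the $S^1$-family $\{F_t\}$, tracking every bar endpoint around the loop via stability, while the route taken by Chernov and Nemirovski \cite{ChernovNemirovski} is to reinterpret a non-negative loop of $0_N$ as a causal loop of skies in an associated globally hyperbolic spacetime, with somewhere-positivity corresponding to the loop being somewhere timelike, which is forbidden by causality. Either way the conclusion is that $H_t$ must vanish identically on $L_t$, so no somewhere positive non-negative loop can close up.
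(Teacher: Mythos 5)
Your overall architecture is sound, and your first step matches the paper: the paper's entire proof of this statement consists of (i) the compact-support reduction to closed $N$ and (ii) a citation of Chernov--Nemirovski for the closed case. For (i) the paper takes a large pre-compact open $X \subset N$ with smooth boundary whose jet space contains the support of the loop and passes to the double $X \sqcup_{\partial X} X$; your ``enlarging and capping off $C$ to a closed manifold $\bar N$'' should be phrased exactly this way, since an arbitrary compact manifold with boundary need not bound a cap, whereas the double always exists, and the extension-by-the-identity argument then goes through verbatim (the somewhere-positivity point survives because it lies in the support region).

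The genuine gap is in your treatment of the closed case. Your generating-function argument is the Colin--Ferrand--Pushkar machine \cite{Colin:Positive}: monotonicity of the min--max values $c(u,F_t)$ rules out \emph{everywhere positive} loops, and for a non-negative loop it yields only that every $c(u,F_t)$ is constant. But constancy of all spectral invariants is perfectly consistent with a somewhere positive non-negative loop as far as anything you have shown: the invariants only record chords on the wall $\{p=0\}$, and your proposed propagation mechanism --- that positivity off the wall must eventually ``create a new wall crossing of negative height'' --- is asserted, not proved; the barcode suggestion is a restatement of the problem rather than a solution, since nothing in your setup forces off-wall positivity to move any bar endpoint. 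Closing exactly this gap is the content of Chernov--Nemirovski's Corollary 5.5, proved by the causality/Lorentzian reinterpretation you mention only in passing, and the paper does not reprove it either --- it simply cites \cite{ChernovNemirovski}. So read as a self-contained proof, your proposal fails at the crux; read as reduction-plus-citation (which your closing sentences gesture at), it coincides with the paper's treatment, and then the entire spectral-invariant discussion, while correct as far as it goes, is not load-bearing.
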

The original sources assume $N$ is compact, but their arguments apply to our set-up, since the loop is assumed to have compact support.
 Take a double of a large pre-compact open $X \subset N$ with smooth boundary such that $J^1(X)$ contains the support of the purported loop. There is an induced non-negative loop of the zero-section in the jet-space $J^1(X \sqcup_{\partial X} X)$ of the double of $X$, i.e.~the manifold obtained by gluing $X$ to itself along its boundary.

\subsection{The Chekanov--Hofer--Shelukhin pseudo-norm}

Fix a  properly embedded  submanifold $K$ and consider its orbit space under the action of $\OP{Cont}_0(M,\xi),$  the identity component of the space of contactomorphisms. 
The (un-parameterized) {\bf Chekanov--Hofer--Shelukhin pseudo-metric} on this orbit space is defined via
  $$\delta^{\mbox{\tiny{un-p}}}_\alpha(K_0,K_1) \coloneqq \inf\{ \| \Phi^1 \|_\alpha;\:\: \Phi^t \in \OP{Cont}_0(M,\xi), \:\: \Phi^1(K_0)=K_1 \}$$
  where
  $$\| \Phi^1 \|_\alpha=\inf_{\Phi^1_H=\Phi^1}\int_0^1\max_{x\in M}|H_t(x)|dt$$
  is the Shelukhin--Hofer norm on $\OP{Cont}_0(M,\xi)$ \cite{Shelukhin:Hofer}.
  
 We define the {\bf parametrized Chekanov--Hofer--Shelukhin pseudo-metric} on the orbit space of parameterized  embeddings $\phi_i \colon K \hookrightarrow M$ by
  $$\delta_\alpha(\phi_0,\phi_1) \coloneqq \inf\{ \| \Phi^1 \|_\alpha;\:\: \Phi^1 \in \OP{Cont}_0(M,\xi), \:\:  \Phi^1 \circ \phi_0 =\phi_1 \}. $$
Given any parametrized submanifold $\phi \colon K \hookrightarrow M$, we get an induced pseudo-metric on $\OP{Cont}_0(M,\xi)$ by setting
$$ \delta_{\alpha,\phi}(\Phi_0,\Phi_1) \coloneqq \delta_\alpha(\Phi_0 \circ \phi,\Phi_1 \circ \phi).$$
We typically consider this pseudo-metric defined by a choice of submanifold $K \subset M$ with the canonical parametrization $\phi = \id_M|_K \colon K \hookrightarrow M.$

Rosen and Zhang showed that the unparameterized pseudo-metric $\delta^{\mbox{\tiny{un-p}}}_\alpha$ identically vanishes on the orbit space of any closed non-Legendrian submanifold \cite[Theorem 1.10]{RosenZhang}. (This was independently proved later in Nakamura's MS thesis \cite[Corollary D.16]{NakamuraMastersThesis}.) In contrast, $\delta^{\mbox{\tiny{un-p}}}_\alpha$ is non-degenerate on the orbit space of any Legendrian submanifold. The non-degeneracy of $\delta^{\mbox{\tiny{un-p}}}_\alpha$ for Legendrians was first proved by Usher \cite[Corollary 3.5]{Usher:C0} when there are no contractible Reeb orbits or relatively contractible Reeb chords, and then by Hedicke \cite[Theorem 5.2]{Hedicke}  when the Legendrian does not sit in a positive loop. We then proved the non-degeneracy of $\delta^{\mbox{\tiny{un-p}}}_\alpha$ for arbitrary closed Legendrians of closed contact manifolds in \cite[Theorem 1.5]{DRS3}.
The analogous results for the parameterized Chekanov-Hofer-Shelukhin pseudo-metric $\delta_\alpha$  follow readily from Theorem \ref{thm:ZeroHamiltonian}. 
 
\begin{cor}
\label{cor:ParameterizedCHS}
The parameterized Chekanov-Hofer-Shelukhin pseudo-metric $\delta_\alpha$ vanishes identically for any non-Legendrian 
$K.$
For any Legendrian, $\delta_\alpha$ is degenerate.
\end{cor}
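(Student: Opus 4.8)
The plan is to derive both statements of Corollary~\ref{cor:ParameterizedCHS} as consequences of Theorem~\ref{thm:ZeroHamiltonian} together with the positive-loop rigidity recorded in Theorem~\ref{thm:NoPositive}. For the first statement, I want to show that for any non-Legendrian $K$ and any two parametrizations $\phi_0,\phi_1$ in the same $\OP{Cont}_0$-orbit, the distance $\delta_\alpha(\phi_0,\phi_1)$ can be made arbitrarily small. By definition of the orbit, there is some contact isotopy $\Phi^t$ with $\Phi^1 \circ \phi_0 = \phi_1$, so it suffices to produce contact isotopies realizing this same parametrized endpoint but with arbitrarily small Shelukhin--Hofer energy. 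This is precisely where Theorem~\ref{thm:ZeroHamiltonian} enters: it replaces $\Phi^t$ by an isotopy $\Psi^t$ whose generating Hamiltonian $H_t$ vanishes along $\Psi^t(K)$, with $\Psi^1$ agreeing with $\Phi^1$ near $K$, hence inducing the same parametrization $\phi_1$.

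First I would make precise how to pass from the vanishing condition to small energy. Since $H_t|_{\Psi^t(K)} \equiv 0$, the function $H_t$ is small on a shrinking neighborhood of the orbit of $K$; the strategy is then to cut off $H_t$ outside such a neighborhood and rescale so that the $C^0$-norm $\int_0^1 \max_x |H_t(x)|\,dt$ becomes as small as desired, while not disturbing $\Psi^t$ on $K$ itself. More carefully, I would iterate Theorem~\ref{thm:ZeroHamiltonian}: starting from the given $\Phi^t$, one obtains $\Psi^t$ supported in an $\epsilon$-neighborhood of $\Phi^t(K)$ that agrees with the target near $K$; one can then compose with a suitable inverse to isolate a loop-type piece, and because the Hamiltonian vanishes on the moving submanifold it can be truncated to have arbitrarily small maximum without changing its action on $K$. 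This yields representatives $\Psi^1$ with $\|\Psi^1\|_\alpha$ as small as we like and $\Psi^1 \circ \phi_0 = \phi_1$, forcing $\delta_\alpha(\phi_0,\phi_1)=0$.

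For the second statement, the goal is to exhibit two distinct parametrizations of a Legendrian $\Lambda$ (or of a fixed reference parametrization composed with two different contactomorphisms) at parametrized pseudo-distance zero, showing $\delta_\alpha$ is degenerate. The natural candidates are $\phi$ and $\phi \circ \psi$ where $\psi$ is a diffeomorphism of $\Lambda$ induced by the time-one flow of a contact Hamiltonian that vanishes on $\Lambda$: as noted in the text, such a Hamiltonian generates an isotopy fixing $\Lambda$ set-wise but reparametrizing it. Using arbitrarily $C^0$-small such Hamiltonians vanishing on $\Lambda$, one produces nontrivial reparametrizations at arbitrarily small energy, so the induced parametrized distance between $\phi$ and its reparametrization is zero while the parametrizations differ, giving degeneracy.

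The main obstacle I anticipate is the first, quantitative step: Theorem~\ref{thm:ZeroHamiltonian} guarantees vanishing of $H_t$ along $\Psi^t(K)$ and $C^0$-proximity of the trajectories, but it does not by itself assert that $\int_0^1 \max_x |H_t|\,dt$ is small — only small near $K$. The delicate point is to truncate and rescale $H_t$ away from the moving submanifold without altering the endpoint parametrization $\Psi^1 \circ \phi_0$ and without creating a Reeb chord or orbit obstruction; since $K$ is non-Legendrian this is exactly the flexibility that Theorem~\ref{thm:NonLegPosLoop} and the accompanying remark exploit, so I would lean on the same cutoff-and-compression construction used there. Once that truncation is carried out rigorously, both vanishing results follow, and the contrast with the Legendrian case (where Theorem~\ref{thm:NoPositive} prevents analogous energy compression for the unparametrized metric) explains why only degeneracy, not identical vanishing, holds in the second statement.
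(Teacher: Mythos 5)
Your argument is correct and takes essentially the same route as the paper: identical vanishing for non-Legendrians follows from the first and third bullet points of Theorem~\ref{thm:ZeroHamiltonian} together with the standard truncation of a Hamiltonian that vanishes along $\Psi^t(K)$, and degeneracy for Legendrians follows by reparametrizing the Legendrian via a contact Hamiltonian vanishing along it (truncated in the same way so that a \emph{fixed} nontrivial reparametrization is realized at arbitrarily small energy). The ``obstacle'' you anticipate is not one: multiplying $H_t$ by a time-dependent bump function equal to $1$ near $\Psi^t(K)$ leaves the flow on $K$ unchanged by uniqueness of ODE solutions and makes $\int_0^1 \max_{x\in M}|H_t(x)|\,dt$ arbitrarily small, so no rescaling (which would change the time-one map), no Reeb-chord consideration, and no appeal to Theorem~\ref{thm:NonLegPosLoop} is needed.
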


\begin{rem}
\label{rem:relax2}
When the Legendrian is closed, \cite[Theorem 1.5]{DRS3} implies that the degenerate $\delta_\alpha$ does not vanish identically.
 But to apply \cite[Theorem 1.5]{DRS3}, we need the contact manifold $M$ to either be closed, or to have a codimension-0 contact embedding into a closed contact manifold $\tilde{M}.$ In this latter case, we moreover require the contact form $\alpha$ of $M$ to be a restriction of a contact form $\tilde{\alpha}$ for $\tilde{M}.$ 
\end{rem}

\section{Proofs of results}

\subsection{Basic results for contact Hamiltonians}
\label{ssec:Basic}

  We start with some preliminary standard computations for contact Hamiltonians that will be useful. In the following we fix a contact form $\alpha$ on $M$ for the correspondence between contact Hamiltonians and contact isotopies.
  \begin{lem}
    \label{lem:inversehamiltonian}
  If $\Phi_i^t \colon M \to M$, $i=0,1,$ are contact isotopes generated by time-dependent contact Hamiltonians $H^i_t \colon M \to \R$ then $\Phi_0^t \circ \Phi^t_1$ is a contact isotopy that is generated by
  $$G_t=H^0_t + e^{f_t \circ (\Phi_0^t)^{-1}}H^1_t \circ (\Phi_0^t)^{-1}$$
  where  the time-dependent function $f_t \colon M \to \R$ is determined by $(\Phi_0^t)^*\alpha=e^{f_t} \alpha$.  In particular, $(\Phi_1^t)^{-1}$ is generated by $-e^{f_t \circ  \Phi_1^t}H^1_t\circ  \Phi_1^t$ (which can be seen by setting $\Phi_0^t \coloneqq (\Phi_1^t)^{-1}$).
\end{lem}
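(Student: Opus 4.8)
The plan is to derive the composition formula directly from the defining equations of the contact-Hamiltonian/isotopy correspondence, namely $\alpha(\dot\Phi^t) = H_t \circ \Phi^t$ together with the $\ker\alpha$-component condition. The key structural fact I would lean on is that the correspondence is linear in the Hamiltonian up to the conformal rescaling introduced by pulling back $\alpha$: contact vector fields are not simply additive under composition because each factor rescales the contact form. So the whole proof is really a careful chain-rule computation that tracks the conformal factor $f_t$ defined by $(\Phi_0^t)^*\alpha = e^{f_t}\alpha$.

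\textbf{Step one} is to differentiate the composition $\Psi^t \coloneqq \Phi_0^t \circ \Phi_1^t$ in $t$ to express $\dot\Psi^t$ in terms of $\dot\Phi_0^t$ and $\dot\Phi_1^t$. By the product/chain rule one gets
$$\dot\Psi^t = \dot\Phi_0^t \circ \Phi_1^t + (D\Phi_0^t)\,\dot\Phi_1^t \circ \Phi_1^t,$$
where the first term is $\dot\Phi_0^t$ evaluated at the moved point and the second is the pushforward by $\Phi_0^t$ of the generating field of $\Phi_1^t$. \textbf{Step two} is to apply $\alpha$ to this expression and use $\alpha(\dot\Psi^t) = G_t \circ \Psi^t$ as the definition of the sought Hamiltonian $G_t$. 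For the first term, $\alpha(\dot\Phi_0^t)\circ\Phi_1^t = (H^0_t\circ\Phi_0^t)\circ\Phi_1^t = H^0_t\circ\Psi^t$, which already accounts for the leading $H^0_t$ after composing with $(\Psi^t)^{-1}$. For the second term, I would feed the pushforward into $\alpha$ and convert it back using the pullback relation: $\alpha\bigl((D\Phi_0^t)v\bigr) = \bigl((\Phi_0^t)^*\alpha\bigr)(v) \cdot(\text{correction})$, which is exactly where $e^{f_t}$ enters, since $(\Phi_0^t)^*\alpha = e^{f_t}\alpha$ and $\alpha(\dot\Phi_1^t) = H^1_t\circ\Phi_1^t$.

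\textbf{The main obstacle} — and the only genuinely delicate point — is bookkeeping the arguments at which each function is evaluated, i.e. getting the $\circ(\Phi_0^t)^{-1}$ and the composition of $f_t$ correct. After assembling $G_t\circ\Psi^t$, one must compose on the right with $(\Psi^t)^{-1}=(\Phi_1^t)^{-1}\circ(\Phi_0^t)^{-1}$ to isolate $G_t$ itself, and it is precisely this unwinding that produces the stated $f_t\circ(\Phi_0^t)^{-1}$ inside the exponential and the $H^1_t\circ(\Phi_0^t)^{-1}$ factor. I would also verify that the $\ker\alpha$-component condition is automatically satisfied, but this typically follows formally once the $\alpha$-component matches, because a contact Hamiltonian is uniquely determined by its value together with the defining PDE and the map $\Psi^t$ is already known to be a contactomorphism (so $\dot\Psi^t$ is a contact vector field and hence fully determined by $\alpha(\dot\Psi^t)$). \textbf{Finally}, for the ``in particular'' clause I would simply set $\Phi_0^t \coloneqq (\Phi_1^t)^{-1}$, so that the composition $\Phi_0^t\circ\Phi_1^t = \id$ is generated by $G_t\equiv 0$; solving the resulting identity $0 = H^0_t + e^{f_t\circ(\Phi_0^t)^{-1}}H^1_t\circ(\Phi_0^t)^{-1}$ for $H^0_t$, the generator of $(\Phi_1^t)^{-1}$, and rewriting the conformal factor of the inverse isotopy in terms of $f_t$ and $\Phi_1^t$, yields the claimed formula $-e^{f_t\circ\Phi_1^t}H^1_t\circ\Phi_1^t$.
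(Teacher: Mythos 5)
Your proposal is correct and follows essentially the same route as the paper's proof: differentiate $\Phi_0^t \circ \Phi_1^t$ by the chain rule, apply $\alpha$, use $(\Phi_0^t)^*\alpha = e^{f_t}\alpha$ on the pushed-forward term, and unwind the evaluation points to isolate $G_t$, with the ``in particular'' clause obtained by setting $\Phi_0^t = (\Phi_1^t)^{-1}$ so that $G_t \equiv 0$. Your added observation --- that matching the $\alpha$-component suffices because $\dot\Psi^t$ is already a contact vector field and hence determined by $\alpha(\dot\Psi^t)$ --- is a standard point the paper leaves implicit, and it is correct.
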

Here and throughout, composition occurs at each $t.$ For example, $\Phi_0^t \circ \Phi^t_1$ is an isotopy with the same time-parameter as $\Phi_0^t$ and $\Phi^t_1.$
\begin{proof}The chain-rule implies
\begin{eqnarray*} 
G_t(\Phi^t_0 \circ \Phi^t_1) & = & 
      \alpha\left(\frac{d}{dt} \left(\Phi^t_0 \circ \Phi^t_1\right)\right) = \alpha\left(\left(\frac{d}{dt}\Phi^t_0\right)\left(\Phi^t_1\right)+D\Phi^t_0 \circ\left(\frac{d}{dt}\Phi^t_1\right)\right) \\
     & =  & H_t^{ 0 }(\Phi^t_0 \circ \Phi^t_1) + e^{f_t \circ \Phi^t_1}H^1_t \circ \Phi^t_1.
\end{eqnarray*}
\end{proof}

\begin{lem}
  \label{lem:backwards}
    If $\Phi^t \colon M \to M$ is a contact isotopy generated by a time-dependent contact Hamiltonian $H_t \colon M \to \R$, then $(\Phi^1)^{-1}\circ \Phi^{1-t}$  is a contact isotopy generated by 
    $G_t=-e^{f\circ \Phi^1}  H_{1-t} \circ \Phi^{1}$  
    where the smooth function $f \colon M \to \R$ is determined by $((\Phi^1)^{-1})^*\alpha=e^f \alpha$.
    In particular, if $H_t$ vanishes along the image of $K$ under $\Phi^t$, then $G_t$ vanishes along the image of  $K$  under  $(\Phi^1)^{-1}\circ \Phi^{1-t}.$ 
  \end{lem}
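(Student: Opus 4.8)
The plan is to recover $G_t$ directly from the defining relation $\alpha(\dot\Psi^t)=G_t\circ\Psi^t$ for the contact isotopy $\Psi^t := (\Phi^1)^{-1}\circ\Phi^{1-t}$. First I would note that $\Psi^t$ is genuinely a contact isotopy: it is a composition of the fixed contactomorphism $(\Phi^1)^{-1}$ with the reparameterized contact flow $\Phi^{1-t}$, and $\Psi^0=(\Phi^1)^{-1}\circ\Phi^1=\id$. Hence $\Psi^t$ admits a unique generating contact Hamiltonian, and it suffices to compute $\alpha(\dot\Psi^t)$; the auxiliary $d\alpha$-equation in the contact-Hamiltonian correspondence is then automatic.

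Next I would differentiate. Since $(\Phi^1)^{-1}$ does not depend on $t$, the chain rule together with $\tfrac{d}{dt}(1-t)=-1$ gives $\dot\Psi^t=-D(\Phi^1)^{-1}(\dot\Phi^{1-t})$, where $\dot\Phi^{1-t}$ is the velocity of the original flow evaluated at time $1-t$. Applying $\alpha$ and using the conformal relation $((\Phi^1)^{-1})^*\alpha=e^f\alpha$ converts the pushed-forward vector into a factor $e^{f}$ times $\alpha(\dot\Phi^{1-t})$, evaluated at the appropriate base point. Since $H_t$ generates $\Phi^t$ we have $\alpha(\dot\Phi^{1-t})=H_{1-t}\circ\Phi^{1-t}$, and so along the flow $\alpha(\dot\Psi^t)=-e^{f\circ\Phi^{1-t}}\,H_{1-t}\circ\Phi^{1-t}$.

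The only remaining bookkeeping step is to rewrite this as a function composed with $\Psi^t$. Writing $q=\Psi^t(x)$, the identity $\Phi^1(q)=\Phi^1\big((\Phi^1)^{-1}(\Phi^{1-t}(x))\big)=\Phi^{1-t}(x)$ lets me replace every occurrence of $\Phi^{1-t}$ by $\Phi^1$ at the point $q$, yielding $G_t=-e^{f\circ\Phi^1}\,H_{1-t}\circ\Phi^1$, as claimed. Equivalently, this is exactly Lemma~\ref{lem:inversehamiltonian} applied to the pair $\Phi_0^t\equiv(\Phi^1)^{-1}$, whose generating Hamiltonian is $0$ and whose conformal factor is $f$, and $\Phi_1^t:=\Phi^{1-t}$, whose generating Hamiltonian is $-H_{1-t}$; the chain-rule proof of that lemma applies verbatim to families of contactomorphisms that need not start at the identity.

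Finally, for the ``in particular'' statement I would simply evaluate $G_t$ along $\Psi^t(K)$. For $x\in K$ the relation above gives $\Phi^1(\Psi^t(x))=\Phi^{1-t}(x)\in\Phi^{1-t}(K)$, and by hypothesis $H_{1-t}$ vanishes on $\Phi^{1-t}(K)$; hence $G_t$ vanishes along $\Psi^t(K)$. The main, and only mild, obstacle throughout is keeping careful track of the base points at which $\alpha$, $f$, and $H$ are evaluated, and correctly accounting for the conformal factor $e^f$ produced by pulling back $\alpha$ under the non-strict contactomorphism $(\Phi^1)^{-1}$; there is no genuine analytic difficulty.
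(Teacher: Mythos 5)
Your proposal is correct and follows essentially the same route as the paper's proof: a direct chain-rule computation of $\alpha\bigl(\tfrac{d}{dt}\bigl((\Phi^1)^{-1}\circ\Phi^{1-t}\bigr)\bigr)$, using $((\Phi^1)^{-1})^*\alpha=e^f\alpha$ to extract the conformal factor and arrive at $-e^{f\circ\Phi^{1-t}}H_{1-t}\circ\Phi^{1-t}$. You merely make explicit two steps the paper leaves implicit---the base-point substitution $\Phi^1\circ\Psi^t=\Phi^{1-t}$ that rewrites the result as $G_t=-e^{f\circ\Phi^1}H_{1-t}\circ\Phi^1$, and the observation that this is an instance of Lemma~\ref{lem:inversehamiltonian} with $\Phi_0^t\equiv(\Phi^1)^{-1}$---both of which are correct.
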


\begin{proof}
\begin{eqnarray*}
G_t((\Phi^1)^{-1}\circ \Phi^{1-t}) 
     &=&  
      \alpha\left(\frac{d}{dt} \left((\Phi^1)^{-1}\circ \Phi^{1-t}\right)\right) 
     = \alpha\left(D(\Phi^1)^{-1}\left(\frac{d}{dt}\Phi^{1-t}\right)\right)\\
     &=&   -e^{f\circ \Phi^{1-t}} H_{1-t}(\Phi^{1-t}). 
\end{eqnarray*}
\end{proof}

  \begin{lem}
  	\label{lem:conjugation}
	 Let $\Psi \in \OP{Cont}(M,\xi)$ be a contactomorphism not necessarily contact isotopic to the identity. 
    If $\Phi^t \colon M \to M$ is a contact isotopy generated by a time-dependent contact Hamiltonian $H_t \colon M \to \R$, then $\Psi \circ \Phi^t \circ \Psi^{-1}$ is a contact isotopy generated by
         $ G_t=e^{f\circ \Psi^{-1}} H_t \circ \Psi^{-1}$
where  
    $\Psi^*\alpha=e^f \alpha$.  
    In particular, if $H_t$ vanishes along the image of $K$ under $\Phi^t$, then $G_t$ vanishes along the image of $\Psi(K)$ under $\Psi \circ \Phi^t \circ \Psi^{-1}$.
  \end{lem}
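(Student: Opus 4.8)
The plan is to reproduce, essentially verbatim in structure, the direct chain-rule computations used for Lemmas \ref{lem:inversehamiltonian} and \ref{lem:backwards}. The only genuinely new feature here is that the conjugating map $\Psi$ is a \emph{fixed} (time-independent) contactomorphism, which in particular need not be generated by any contact Hamiltonian; this is precisely what keeps the computation clean, since $\frac{d}{dt}\Psi=0$ and so no extra term from $\Psi$ itself enters the time derivative. Writing $\widetilde\Phi^t := \Psi\circ\Phi^t\circ\Psi^{-1}$, I will recover its generating Hamiltonian directly from the defining identity $\alpha(\dot{\widetilde\Phi}^t)=G_t\circ\widetilde\Phi^t$.

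Concretely, I first differentiate $\widetilde\Phi^t$ in $t$: by the chain rule and $\frac{d}{dt}\Psi=0$, the velocity of $\widetilde\Phi^t$ equals $D\Psi$ applied to the velocity of $\Phi^t\circ\Psi^{-1}$. I then evaluate $\alpha$ on this vector and trade the derivative $D\Psi$ for the conformal factor using $\Psi^*\alpha=e^f\alpha$, and finally substitute the defining relation $\alpha(\dot\Phi^t)=H_t\circ\Phi^t$. Tracking base points, this yields
\begin{align*}
G_t\bigl(\Psi\circ\Phi^t\circ\Psi^{-1}\bigr)
&= \alpha\!\left(\frac{d}{dt}\bigl(\Psi\circ\Phi^t\circ\Psi^{-1}\bigr)\right)
 = (\Psi^*\alpha)\!\left(\frac{d}{dt}\bigl(\Phi^t\circ\Psi^{-1}\bigr)\right)\\
&= e^{f\circ\Phi^t\circ\Psi^{-1}}\,H_t\circ\Phi^t\circ\Psi^{-1}.
\end{align*}
Since the right-hand side is exactly $\bigl(e^{f\circ\Psi^{-1}}\,H_t\circ\Psi^{-1}\bigr)\circ\bigl(\Psi\circ\Phi^t\circ\Psi^{-1}\bigr)$, I read off $G_t=e^{f\circ\Psi^{-1}}\,H_t\circ\Psi^{-1}$, as claimed.

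For the final ``in particular'' assertion, I would simply observe that $\widetilde\Phi^t(\Psi(K))=\Psi(\Phi^t(K))$, so a point in the image of $\Psi(K)$ under $\widetilde\Phi^t$ has the form $\Psi(w)$ with $w\in\Phi^t(K)$; there $G_t(\Psi(w))=e^{f(w)}H_t(w)$, which vanishes precisely because $e^{f(w)}>0$ and $H_t$ vanishes on $\Phi^t(K)$ by hypothesis. The only real subtlety in the whole argument — and the single point that must be handled with care — is the bookkeeping of the base points at which $f$ and $H_t$ are evaluated when pulling $\alpha$ back through $D\Psi$; I expect no analytic or geometric obstacle beyond this.
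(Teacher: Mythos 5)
Your proposal is correct and is essentially identical to the paper's own proof: the paper performs the same one-line chain-rule computation, writing $\alpha\left(D\Psi\left(\frac{d}{dt}\Phi^t\right)\left(\Psi^{-1}\right)\right)=e^{f\circ\Phi^t\circ\Psi^{-1}}H_t\circ\Phi^t\circ\Psi^{-1}$, which is exactly your pullback step $(\Psi^*\alpha)$ made explicit via $D\Psi$. Your base-point bookkeeping to read off $G_t=e^{f\circ\Psi^{-1}}H_t\circ\Psi^{-1}$ and the observation that $G_t$ vanishes on $\Psi(\Phi^t(K))$ because $e^f>0$ match the paper's argument, which leaves those final substitutions implicit.
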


\begin{proof}
\begin{eqnarray*}
G_t(\Psi \circ \Phi^t \circ \Psi^{-1}) 
     &=& 
\alpha\left(\frac{d}{dt} \left(\Psi \circ \Phi^t \circ \Psi^{-1}\right)\right)
 = \alpha\left(D\Psi\left(\frac{d}{dt}\Phi^t\right)\left(\Psi^{-1}\right)\right)\\
 &=& e^{f \circ \Phi^t \circ \Psi^{-1}}H_{t}\left(\Phi^t \circ \Psi^{-1}\right).
\end{eqnarray*}
  
\end{proof}

\subsection{Proof of Theorem \ref{thm:ZeroHamiltonian}}

Usher proved that the ``rigid locus'' of a half-dimensional non-Lagrangian submanifold of a symplectic manifold is empty in  \cite[Corollary 2.7]{Usher:Observations}. 
This was later generalized to the contact setting by Rosen--Zhang in \cite{RosenZhang} and independently by Nakamura \cite{Nakamura:C0}. This means, in particular, that the unparameterized Hofer--Chekanov--Shelukhin pseudo-norm vanishes when restricted to non-Legendrians, i.e.~that two  contact isotopic  non-Legendrians are contact isotopic via contact Hamiltonians of arbitrarily small norm. Our strategy here is to translate the proofs in the aforementioned works to yield a more direct construction of the deformed contact isotopy generated by a small contact Hamiltonian. This leads to Theorem~\ref{thm:ZeroHamiltonian}, which sharpens the result from \cite{RosenZhang} in the following two ways.
\begin{itemize} 
\item The deformed contact isotopy can be assumed to be generated by a contact Hamiltonian that \emph{vanishes} along the image of the non-Legendrian (as opposed to just being arbitrarily small there).
  \item The time-one map of the deformed contact isotopy can be assumed to induce the same parametrization as the original one, when restricted to the non-Legendrian.
  \end{itemize}
The latter property can be rephrased as saying that the parametrized version of the Hofer--Chekanov--Shelukhin pseudo-norm vanishes when restricted to non-Legendrian submanifolds; see Corollary \ref{cor:ParameterizedCHS}.

We first simplify the problem to the case when $\Phi^t$ is  $C^\infty$-small.

\begin{lem}
\label{lem:LocalTheoremB}
Fix  $\epsilon >0.$ 
If Theorem \ref{thm:ZeroHamiltonian} holds for any contact isotopy whose $C^\infty$-norm is bounded by $\epsilon,$ then Theorem \ref{thm:ZeroHamiltonian} holds for any contact isotopy.
\end{lem}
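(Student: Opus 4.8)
The plan is to reduce an arbitrary contact isotopy to a concatenation of $C^\infty$-small ones by subdividing the time interval, apply the hypothesis to each short segment, and then glue the resulting deformed isotopies back together. The point that makes the gluing work is that the third conclusion of Theorem \ref{thm:ZeroHamiltonian} guarantees each deformed segment sends the non-Legendrian \emph{exactly} onto the correct intermediate position, so the hypothesis can be applied afresh to the next segment and the vanishing of the Hamiltonians propagates.

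In detail, I would fix a partition $0=t_0<t_1<\dots<t_N=1$ with $t_i=i/N$ and set $\Phi_i^s:=\Phi^{t_{i-1}+s/N}\circ(\Phi^{t_{i-1}})^{-1}$ for $s\in[0,1]$, so that $\Phi_i^0=\mathrm{id}$ and $\Phi_i^1=\Phi^{t_i}\circ(\Phi^{t_{i-1}})^{-1}$. Since every isotopy is compactly supported, $\dot\Phi^t$ and its spatial derivatives are uniformly bounded, so the $C^\infty$-norm of each $\Phi_i$ is $O(1/N)$; hence for $N$ large each $\Phi_i$ has $C^\infty$-norm below the given $\epsilon$. Writing $K_i:=\Phi^{t_i}(K)$, each $K_{i-1}$ is again a properly embedded non-Legendrian, with $\Phi_i^s(K_{i-1})=\Phi^{t_{i-1}+s/N}(K)$ and $\Phi_i^1(K_{i-1})=K_i$. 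Applying the hypothesis to the pair $(\Phi_i,K_{i-1})$ yields a contact isotopy $\Psi_i^s$ generated by a Hamiltonian that vanishes along $\Psi_i^s(K_{i-1})$, with $\Psi_i^s(K_{i-1})$ inside the $\epsilon_i$-neighborhood of $\Phi_i^s(K_{i-1})$ for a freely chosen $\epsilon_i\le\epsilon$, and with $\Psi_i^1=\Phi_i^1$ on a neighborhood of $K_{i-1}$.

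Next I would concatenate, setting $\Psi^t:=\Psi_i^{s(t)}\circ\Psi_{i-1}^1\circ\cdots\circ\Psi_1^1$ on the $i$-th subinterval, where $s(t)=N(t-t_{i-1})$. The linchpin is that $\Psi_i^1=\Phi_i^1$ near $K_{i-1}$ forces $\Psi_i^1(K_{i-1})=\Phi_i^1(K_{i-1})=K_i$, so inductively $\Psi^{t_{i-1}}(K)=\Psi_{i-1}^1\circ\cdots\circ\Psi_1^1(K)=K_{i-1}$. Consequently, on the $i$-th segment $\Psi^t(K)=\Psi_i^{s(t)}(K_{i-1})$; since composing on the right with the fixed contactomorphism $\Psi_{i-1}^1\circ\cdots\circ\Psi_1^1$ contributes no conformal factor, the generating Hamiltonian of $\Psi$ there is the rescaling $s'(t)\,H^{\Psi_i}_{s(t)}$ by the positive factor $s'(t)=N$, which vanishes exactly where $H^{\Psi_i}$ does and hence on $\Psi^t(K)$ (first conclusion). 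The containment of $\Psi_i^{s}(K_{i-1})$ in the $\epsilon_i$-neighborhood of $\Phi_i^s(K_{i-1})=\Phi^t(K)$ gives the second conclusion with total error $\le\epsilon$, and the exact agreements $\Psi_i^1=\Phi_i^1$ near $K_{i-1}$ telescope to $\Psi^1=\Phi_N^1\circ\cdots\circ\Phi_1^1=\Phi^1$ on a neighborhood of $K$, which is the third conclusion.

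The main technical obstacle is purely in making $\Psi^t$ a \emph{smooth} contact isotopy across the gluing times, rather than merely piecewise smooth; this requires each $\Psi_i$ to be at rest near $s=0$ and $s=1$. Since $\Psi_i^0=\mathrm{id}$ by convention, I would reparametrize each $\Psi_i$ in time by a fixed cutoff $\gamma\colon[0,1]\to[0,1]$ with $\gamma(0)=0$, $\gamma(1)=1$ that is constant near both endpoints. This multiplies the generating Hamiltonian by the nonnegative factor $\gamma'$, preserving its vanishing locus (first conclusion) and the time-one map $\Psi_i^1$ (third conclusion), and it shifts the comparison time inside the $i$-th segment by at most $|\gamma(s)-s|/N\le 1/N$; by uniform continuity of $t\mapsto\Phi^t(K)$ this perturbs the second conclusion by a further $O(1/N)$, absorbed into $\epsilon$ for $N$ large. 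With the pieces at rest at their endpoints, the concatenation $\Psi^t$ is smooth and is the required global contact isotopy, completing the reduction.
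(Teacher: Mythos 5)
Your proof is correct and is essentially the paper's argument: the paper's own proof of Lemma \ref{lem:LocalTheoremB} is a one-sentence observation that a finite concatenation of isotopies preserves the three properties of Theorem \ref{thm:ZeroHamiltonian}, and your write-up simply supplies the details of that concatenation (time subdivision, exact matching at the nodes via the third bullet point so errors do not accumulate, and smoothing by a time cutoff). In particular, your key observations---that right-composition by a fixed contactomorphism does not alter the generating Hamiltonian, and that $\Psi_i^1=\Phi_i^1$ near $K_{i-1}$ resets the isotopy exactly onto $K_i$---are precisely what makes the paper's sketched argument go through.
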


\begin{proof}
This follows since (a finite number of) concatenated isotopies preserve the three properties of Theorem \ref{thm:ZeroHamiltonian}.
Note that concatenation here is not a composition of maps at each time $t,$ as it was in Section \ref{ssec:Basic}.
\end{proof}

By Banyaga's  fragmentation result \cite[p.148]{Banyaga:Structure} (see also Rybicki \cite{Rybicki}),
the concatenation preservation of the three properties of Theorem \ref{thm:ZeroHamiltonian} enables us to assume, when proving Theorem \ref{thm:ZeroHamiltonian}, 
that $\Phi^t$ is not only $C^\infty$-small, but also supported in a small neighborhood of some $\pt \in K$.
(If the small support of $\Phi^t$ does not intersect  $K,$ the proof is trivial as we set $\Psi^t := \id$ in Theorem \ref{thm:ZeroHamiltonian}.) 


  \begin{lem}
    \label{lem:DisplacementTrick}
 Consider the given non-Legendrian $K,$ contact isotopy $\Phi^t,$ and constant $\epsilon$ from Theorem \ref{thm:ZeroHamiltonian}. 
Further, assume that $\Phi^t$ is supported in a neighborhood $U \subset M$ that is  displaced from a neighborhood $V \subset M$ of the non-Legendrian  $K \subset M$ by a contact isotopy $\widetilde{\Phi}^t$. If 
$\OP{supp} \widetilde{\Phi}^t \supset U$ is contained inside an $\epsilon$-neighborhood of $K$
 and if $\widetilde{\Phi}^t$ is generated by a Hamiltonian which vanishes on $\widetilde{\Phi}^t(K),$ 
 then Theorem \ref{thm:ZeroHamiltonian} holds for this  $K, \Phi^t, \epsilon.$
  \end{lem}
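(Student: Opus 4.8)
The plan is to build the required isotopy $\Psi^t$ by post-composing $\Phi^t$ with a correction loop manufactured from the displacing isotopy $\widetilde{\Phi}^t$, whose sole job is to absorb the Reeb defect of $\Phi^t$ along $K$. The guiding observation is a reformulation of the first bullet of Theorem \ref{thm:ZeroHamiltonian}: by the defining relation $\alpha(\dot{\Psi}^t)=H_t\circ\Psi^t$, the generating Hamiltonian of $\Psi^t$ vanishes on $\Psi^t(K)$ if and only if, for every $x\in K$, the trajectory $t\mapsto\Psi^t(x)$ is everywhere tangent to $\xi$. Thus the first bullet asks that $K$ travel along $\xi$-horizontal paths, the third bullet ($\Psi^1=\Phi^1$ near $K$) only constrains the time-one germ, and the second bullet forces the whole family to remain in an $\epsilon$-neighbourhood of $\Phi^t(K)$. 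I would therefore seek $\Psi^t=\Phi^t\circ Q^t$ with $Q^0=\mathrm{id}$ and $Q^1=\mathrm{id}$ on a neighbourhood of $K$, so that the third bullet holds automatically, and then choose $Q^t$ to cancel the Reeb defect of $\Phi^t$.

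To make the cancellation precise, I would compute the generating Hamiltonian of $\Psi^t=\Phi^t\circ Q^t$ using Lemma \ref{lem:inversehamiltonian}; horizontality along $\Psi^t(K)$ then becomes a pointwise prescription for the generating Hamiltonian $L_t$ of $Q^t$ along the moving submanifold $Q^t(K)$, namely that $L_t$ equal the negative of the conformally transformed value of $H_t$ there. The isotopy $Q^t$ is built from $\widetilde{\Phi}^t$: since $\widetilde{\Phi}^t$ separates the support $U$ of $\Phi^t$ from the neighbourhood $V\supset K$ while moving $K$ horizontally inside the $\epsilon$-neighbourhood, it provides a long horizontal ``transit'' of $K$ through a region where $\Phi^t$ is inactive, along which the prescribed Reeb-cancelling term may be inserted and subsequently undone so that $Q^t$ closes up to the identity near $K$ at time one. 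Here I would invoke Lemma \ref{lem:backwards} for the reversed portion of the transit, Lemma \ref{lem:conjugation} for the conjugations used to transport $\Phi^t$'s effect, and Lemma \ref{lem:LocalTheoremB} to treat the out-and-back passage as a concatenation. The non-Legendrian hypothesis is essential precisely at the closing-up: the Reeb displacement accumulated by the cancelling term can be absorbed only because a non-Legendrian (unlike a Legendrian) always has transverse room, so $K$ can be returned to itself by a horizontal motion that nonetheless carries a nonzero Hamiltonian.

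Finally I would verify the three bullets. The third is built into the construction $Q^1=\mathrm{id}$ near $K$. The first follows because Lemma \ref{lem:inversehamiltonian} exhibits the generating Hamiltonian of $\Psi^t$ as vanishing identically along $\Psi^t(K)$, by the choice of $L_t$. For the second, since $\Phi^t$ and $\widetilde{\Phi}^t$ are supported in the $\epsilon$-neighbourhood of $K$ and may be taken $C^\infty$-small by Lemma \ref{lem:LocalTheoremB}, the detour keeps $\Psi^t(K)$ inside the $\epsilon$-neighbourhood of $\Phi^t(K)$ after the reparametrisation that the concatenation lemma permits. I expect the main obstacle to be the simultaneous fulfilment of the first and third bullets: arranging $Q^t$ so that its Hamiltonian genuinely cancels the Reeb defect of $\Phi^t$ along the moving image of $K$ (so that the Hamiltonian of $\Psi^t$ vanishes on $\Psi^t(K)$, not merely becomes small there) while still returning $Q^t$ to the identity near $K$ at time one. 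This is exactly the step that consumes the displacement hypothesis, which supplies the horizontal room off $U$ needed to route and then dispose of the Reeb defect; the remaining $\epsilon$-bookkeeping for the second bullet is routine.
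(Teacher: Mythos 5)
Your skeleton---use the reversed displacing isotopy to move $K$ out of $\OP{supp}\Phi^t$, let $\Phi^1$ act, bring $K$ back, citing Lemmas \ref{lem:backwards}, \ref{lem:conjugation} and concatenation---is the same as the paper's, but the execution has a genuine gap, and it is exactly the step you yourself flag as ``the main obstacle.'' You never construct $Q^t$: you prescribe that its Hamiltonian $L_t$ take the \emph{nonzero} ``Reeb-cancelling'' value $-e^{-f_t}\,H_t\circ\Phi^t$ along $Q^t(K)$ while simultaneously demanding $Q^1=\id$ near $K$, and then assert that the displacement hypothesis lets the accumulated defect be ``absorbed'' because a non-Legendrian has ``transverse room.'' No such absorption mechanism exists within this lemma, and none is needed: the non-Legendrian hypothesis is consumed entirely in Lemma \ref{lem:displace1}, which \emph{produces} $\widetilde{\Phi}^t$; Lemma \ref{lem:DisplacementTrick} itself is a purely formal bookkeeping statement in which non-Legendrianity plays no role. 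Your diagnosis that positivity of the defect must be cancelled and then disposed of signals a mechanism that is not how the hypothesis is used.

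The correct move, and the paper's, is avoidance rather than cancellation: decouple the two motions in time so that the prescribed $L_t$ is identically zero at every instant. First run $(\widetilde{\Phi}^1)^{-1}\circ\widetilde{\Phi}^{1-t}$, whose Hamiltonian vanishes along the moving image of $K$ by Lemma \ref{lem:backwards}; this parks $K$ at $(\widetilde{\Phi}^1)^{-1}(K)$, disjoint from $U\supset\OP{supp}\Phi^t$. Then, instead of flowing $\Phi^t$ against a cancelling term, implement $\Phi^1$ and the return transit in one stroke via the conjugated isotopy $\bigl(\Phi^1\circ(\widetilde{\Phi}^1)^{-1}\bigr)\circ\widetilde{\Phi}^t\circ\bigl(\Phi^1\circ(\widetilde{\Phi}^1)^{-1}\bigr)^{-1}$: Lemma \ref{lem:conjugation} gives vanishing of its Hamiltonian along the image of $\Phi^1\circ(\widetilde{\Phi}^1)^{-1}(K)=(\widetilde{\Phi}^1)^{-1}(K)$, where the key identity holds precisely because $\Phi^1$ is supported in $U$, which misses $(\widetilde{\Phi}^1)^{-1}(V)$. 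Since only the \emph{time-one germ} of $\Phi^t$ near $K$ is constrained (your own observation about the third bullet), nothing about the intermediate flow of $\Phi^t$ along $K$ ever needs cancelling: either $H\equiv 0$ while $K$ is in transit, or the moving copy of $K$ is disjoint from $U$ while $\Phi$ would act. In this staging the closing-up $\Psi^1|_V=\Phi^1|_V$ is a two-line computation using $(\Phi^1)^{-1}|_{(\widetilde{\Phi}^1)^{-1}(V)}=\id$, not an obstacle, and the $\epsilon$-estimate of the second bullet follows since the whole motion is confined to the $\epsilon$-neighborhood of $K$ containing $\OP{supp}\widetilde{\Phi}^t\supset U$.
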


  \begin{proof}
  	By Lemma \ref{lem:backwards} the contact isotopy  $(\widetilde{\Phi}^1)^{-1} \circ \widetilde{\Phi}^{1-t}$  satisfies the property that its generating Hamiltonian vanishes along the image of $K$ under the isotopy. Moreover, the time-one map of this contact isotopy displaces  $V$ from $U$ (because $\widetilde{\Phi}^1(U) \cap V = \emptyset$ implies  $(\widetilde{\Phi}^1)^{-1} \circ \widetilde{\Phi}^{1-1}(V)  \cap U=\emptyset$). 
  	
    The contact isotopy  $\Psi^t$  is constructed by concatenating (locally) the contact isotopy    $(\widetilde{\Phi}^1)^{-1} \circ \widetilde{\Phi}^{1-t}$  that displaces $V$  from $U$, with the  contact isotopy 
$$ \Phi^1 \circ (\widetilde{\Phi}^1)^{-1} \circ \widetilde{\Phi}^t \circ \widetilde{\Phi}^1  \circ (\Phi^1)^{-1} = 
\left(\Phi^1 \circ (\widetilde{\Phi}^1)^{-1} \right) \circ \widetilde{\Phi}^t \circ \left(\Phi^1 \circ (\widetilde{\Phi}^1)^{-1}\right)^{-1},$$
which is the conjugation of the isotopy $\widetilde{\Phi}^t$ with a contactomorphism $\Phi^1 \circ (\widetilde{\Phi}^1)^{-1}$  that might not be equal to the identity inside $U \supset \OP{supp} \Phi^1$ at $t=0$.
(We concatenate these two paths as in the proof of Lemma \ref{lem:LocalTheoremB}.  Technically, $\Psi^t$ is defined for $0 \le t \le 2,$ but to simplify notation, we omit this needed reparameterization of $t.$)

 Set the $\Psi$ and $\Phi^t,$ as used in the notation of Lemma \ref{lem:conjugation}, equal to $\Phi^1 \circ (\widetilde{\Phi}^1)^{-1}$ and $\widetilde{\Phi}^t$ as used to define the second contact isotopy in the preceding paragraph.
 Lemma \ref{lem:conjugation}  implies that this second contact isotopy is generated by a Hamiltonian which vanishes on the image of  $\Phi^1 \circ (\widetilde{\Phi}^1)^{-1}(K) = (\widetilde{\Phi}^1)^{-1} (K)$ under this second isotopy. 
 For this last equality, recall that $\Phi^1|_{(\widetilde{\Phi}^1)^{-1} (K)} = \id|_{(\widetilde{\Phi}^1)^{-1} (K)}$ because  $\Phi^t$ is supported in $U$ which does not intersect $(\widetilde{\Phi}^1)^{-1} (V) \supset (\widetilde{\Phi}^1)^{-1} (K).$

      
Finally,  we conclude that 
$$\Psi^1|_V = \Phi^1 \circ (\widetilde{\Phi}^1)^{-1}\circ\widetilde{\Phi}^1\circ\widetilde{\Phi}^1\circ 
 (\Phi^1)^{-1} \circ (\widetilde{\Phi}^1)^{-1} \circ
\widetilde{\Phi}^0  |_V  =\Phi^1 |_V $$
 where in the last equality we use  $(\Phi^1)^{-1}|_{(\widetilde{\Phi}^1)^{-1} (V)} = \id|_{(\widetilde{\Phi}^1)^{-1} (V)}$ and $\widetilde{\Phi}^0 = \id$. 
    \end{proof}


    
\begin{lem}
      \label{lem:displace1}
 Consider $\epsilon>0$ and $K$ from Theorem \ref{thm:ZeroHamiltonian}.
Fix $p \in K.$ 
There is a neighborhood $U \subset M$ of $p,$  a neighborhood $V \subset M$ of  $K$  and a contact isotopy $\widetilde{\Phi}^t$ which displaces $U$ from $V \cup U$, such that $\widetilde{\Phi}^t$ satisfies its assumptions in Lemma \ref{lem:DisplacementTrick} (i.e.~$\OP{supp}\widetilde{\Phi}^t \supset U$ is contained inside an $\epsilon$-neighborhood of $K$  and $\widetilde{\Phi}^t$ is generated by a Hamiltonian which vanishes on $\widetilde{\Phi}^t(K)$ ). 
\end{lem}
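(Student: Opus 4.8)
The plan is to realize $\widetilde{\Phi}^t$ as the flow of a single \emph{time-independent} contact Hamiltonian $H$, supported in the prescribed $\epsilon$-neighborhood of $K$ and containing the neighborhood $U$ of $p$, chosen so that $K$ lies in its zero locus $\Sigma:=\{H=0\}$ near $p$. The point of using a time-independent $H$ is that its flow automatically preserves $\Sigma$: writing $X_H$ for the contact vector field, $R_\alpha$ for the Reeb field, and decomposing $X_H=H\,R_\alpha+Y_H$ with $Y_H\in\xi$ determined by $d\alpha(Y_H,\cdot)|_\xi=-dH|_\xi$, one computes $dH(X_H)=H\,dH(R_\alpha)$, since $dH(Y_H)=-d\alpha(Y_H,Y_H)=0$. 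Hence $t\mapsto H(\widetilde{\Phi}^t(q))$ solves a linear ODE along each trajectory, so $\Sigma$ is invariant. Consequently $\widetilde{\Phi}^t(K)\subset\Sigma$, and the generating Hamiltonian of $\widetilde{\Phi}^t$ vanishes on the moving image $\widetilde{\Phi}^t(K)$ — exactly the hypothesis on $\widetilde{\Phi}^t$ demanded in Lemma~\ref{lem:DisplacementTrick}.

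It remains to choose $\Sigma\supset K$ and $H$ so that the flow actually pushes a small $U$ off $V\cup U$. Along $\Sigma$ the field $X_H$ reduces to $Y_H$, which one checks spans the characteristic line field of the hypersurface $\Sigma$; so the question is whether a transverse slice of $K$ can be swept off itself inside $\Sigma$ by the characteristic flow. Since $H$ vanishes on $K$ we have $dH|_{T_pK}=0$, so the admissible displacing vectors $Y_H(p)$ are the $d\alpha$-duals of covectors annihilating $W:=T_pK\cap\xi_p$; that is, they fill out the $d\alpha$-orthogonal $W^{\perp}\subset\xi_p$. Thus I can arrange $Y_H(p)\notin T_pK$ precisely when $W$ is not coisotropic in $(\xi_p,d\alpha)$. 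This is where the non-Legendrian hypothesis enters: if $\dim K<n$, or if $\dim K=n$ and $T_pK\not\subset\xi_p$, then $\dim W\le n-1<n$, so $W$ cannot be coisotropic and such a $Y_H(p)$ exists. I would then prescribe the $1$-jet of $H$ along $K$ near $p$ to realize this $Y_H(p)$, cut off by a bump function to keep $\OP{supp}\widetilde{\Phi}^t$ inside the $\epsilon$-neighborhood, take $V$ a tube around $K$ thinner than the finite distance the characteristic through $p$ travels away from $K$, and take $U\ni p$ much smaller still; flowing long enough then displaces $U$ off $V\cup U$.

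The step I expect to be the main obstacle is exactly this transversality at $p$, and its honest dependence on the hypothesis. At a point where $K$ is infinitesimally Legendrian — $\dim K=n$, $T_pK\subset\xi_p$ and $d\alpha|_{T_pK}=0$ — the subspace $W=T_pK$ is Lagrangian, hence $W^{\perp}=W$, and one verifies that for \emph{every} hypersurface $\Sigma\supset K$ the characteristic direction at $p$ is forced into $T_pK$; no contact Hamiltonian vanishing on $K$ can move $p$ off $K$ to first order, matching the set-wise rigidity of Legendrians under vanishing Hamiltonians. When such points are isolated this is only an infinitesimal nuisance: because $K$ has codimension at least one in the $2n$-dimensional $\Sigma$, I can still choose $\Sigma$ so that the characteristic leaf through $p$, though tangent to $K$ at $p$, leaves $K$ for $t\neq0$ and escapes a thin enough $V$, and a finite-time argument recovers the displacement. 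Turning this infinitesimal transversality into a uniform finite escape of the whole slice $U\cap\Sigma$ from $V\cup U$, compatibly with the $\epsilon$-support constraint, is the delicate bookkeeping — and it is precisely the mechanism that fails for genuine Legendrians, which is why the non-Legendrian hypothesis on $K$ is indispensable.
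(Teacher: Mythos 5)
Your mechanism for the vanishing condition is correct, and in fact cleaner than the paper's: for an autonomous contact Hamiltonian one indeed has $dH(X_H)=H\,dH(R_\alpha)$, so the zero level set is flow-invariant and any autonomous $H$ vanishing on $K$ automatically vanishes on $\widetilde{\Phi}^t(K)$. The paper achieves the same condition differently, by flowing an autonomous $H$ with $H|_K=0$ and then passing to the \emph{inverse} isotopy, whose generator $-e^{f_t\circ(\widetilde{\Phi}^t)^{-1}}H\circ(\widetilde{\Phi}^t)^{-1}$ visibly vanishes on the moving image of $K$ (Lemma \ref{lem:inversehamiltonian}). Your pointwise linear algebra — realizing any $Y\in (T_pK\cap\xi_p)^{d\alpha}\setminus T_pK$ as $Y_H(p)$ for an $H$ vanishing on $K$, which is possible exactly when $W=T_pK\cap\xi_p$ is not coisotropic — coincides with the paper's Case 1 ($T_pK$ not Lagrangian in $\xi_p$).

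The genuine gap is your treatment of Lagrangian tangencies, which you dispose of under the assumption that "such points are isolated." Nothing in the hypotheses forces this: a non-Legendrian $K$ with $\dim K=n$ may be honestly Legendrian on an open subset, and the lemma fixes an \emph{arbitrary} $p\in K$ — this generality is needed because the fragmentation step preceding the lemma localizes $\Phi^t$ near an arbitrary point of $K$. If $p\in\OP{int}\mathcal{L}(K)$, where $\mathcal{L}(K)$ is the locus of Lagrangian tangencies, the obstruction is not merely first-order: for \emph{every} $H$ vanishing on $K$, along the Legendrian piece one has $X_H=Y_H\in (TK)^{d\alpha}=TK$, so the characteristic leaf through $p$ is tangent to $K$ at every point of that piece and, by uniqueness of integral curves, stays inside $K$ until it reaches $\OP{bd}\mathcal{L}(K)$; no choice of $\Sigma\supset K$ makes it "leave $K$ for $t\neq 0$," so your patch fails exactly where it is needed. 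The paper handles this with a two-stage construction absent from your proposal: first (Case 2.2) slide $p$ within the open part of $\mathcal{L}(K)$ arbitrarily close to a boundary point $p'\in\OP{bd}\mathcal{L}(K)$ by an isotopy whose Hamiltonian vanishes on all of $K$ (using connectedness of $K$); then (Case 2.1) at $p'$ push along a tangent direction $X\in T_{p'}K$ chosen so that the image point lands in $K\setminus\mathcal{L}(K)$ (possible since $p'$ has no Legendrian neighborhood), observe that $K$ and $\Phi^\epsilon_X(K)$ are then \emph{not} tangent there, and run a second isotopy whose initial vector $W$ is tangent to the moved copy $\Phi^\epsilon_X(K)$ but normal to $K$, with vanishing Hamiltonian anchored to the moved copy, concatenating the two. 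Note that even reaching $\OP{bd}\mathcal{L}(K)$ does not suffice within your single-autonomous-$H$ framework: all admissible directions at $p'$ may still be tangent to $K$, and the escape genuinely requires the second isotopy attached to $\Phi^\epsilon_X(K)$ rather than to $K$. As written, your argument proves the lemma only at points where $T_pK$ fails to be Lagrangian.
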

\begin{proof}
Given any choice of neighborhood of $K$, the below construction can be carried out inside that neighborhood. This implies the sought property of the support of the contact isotopy that we now proceed to define.

{\bf 1. The case when $T_pK$ is not a Lagrangian subspace of $\xi_p$:} 

This part of the argument is similar to \cite[Proposition 8.6]{RosenZhang}.

The property that $T_pK$ is not a Lagrangian subspace is equivalent to 
$$(T_pK \cap \xi_p)^{d\alpha} \neq T_pK \cap \xi_p,$$
 where $(T_pK \cap \xi_p)^{d\alpha} \subset \xi_p$ denotes the symplectic orthogonal  (recall that $\dim T_pK \le n$). Hence, we can find a non-zero vector
$$X_H \in (T_pK \cap \xi_p)^{d\alpha} \setminus T_pK \subset \xi_p \setminus T_pK$$
such that the one-form  $\eta\coloneqq d\alpha( \cdot, X_H )$ on $T_pM$  vanishes on $T_pK$. The one-form $\eta$ extends to the exterior derivative $dH$ of a function $H \colon M \to \R$ that can be taken to vanish on all of $K$.

Consider the contact isotopy $\widetilde{\Psi}^t$ generated by the autonomous contact Hamiltonian $H$.  Since $H$ vanishes on $p$ we get $\dot{\Psi}^0(p)=X_H.$ In view of Lemma \ref{lem:inversehamiltonian}, the inverse  $\widetilde{\Phi}^t\coloneqq (\widetilde{\Psi}^t)^{-1}$ is generated by the non-autonomous contact Hamiltonian  $G_t \coloneqq  -e^{f_t \circ (\widetilde{\Phi}^t)^{-1}}H\circ (\widetilde{\Phi}^t)^{-1}.$ In particular, $\widetilde{\Phi}^t$ is generated by a contact Hamiltonian that vanishes along the image of $K$ under $\widetilde{\Phi}^t.$

Finally, since the contact vector field $-X_H=X_{G_0}$ is normal to $K$ at $p$, it follows that $G_t$ generates a contact isotopy that displaces a small neighborhood $p \in U \subset M$ from  $V \cup U$ for some small neighborhood $V$ of $K.$ 

{\bf 2. The case when $T_pK \subset \xi_p$ is a Lagrangian  subspace:} 

Consider the closed subset $\mathcal{L}(K) \subset K$ of points for which $T_pK \subset \xi$ is Lagrangian (which of course is empty whenever $\dim K < n$).

{\bf 2.1 The case when $p \in \OP{bd}\mathcal{L}(K)$:}

First, by a construction which is similar to the one above, for any $X \in T_pK \subset \xi$ 
we can construct a time-dependent contact Hamiltonian $H$ that vanishes on the image of $K$ under the generated isotopy $\Phi^t_X,$ and for which the corresponding contact vector field at time $t=0$ satisfies $X_{H}(p)=X.$ Since $X$ is tangent to $K$, it is not necessarily the case that $p$ is displaced by $\Phi^t_X$ for small $t \ge 0$.

If we can find some $X$ such that $\Phi^\epsilon_X(p) \notin K$  for all small $\epsilon>0$ then we are done. Assume not, then  $\Phi_X^{ \epsilon }(p) \in K$  for all $X$ and  for some $ \epsilon \ge 0.$  Since we are in the case $p \in \OP{bd}\mathcal{L}(K)$, the point $p$ does not have a Legendrian neighborhood in $K$. We can thus find a  direction  $X$ for which  $\Phi_X^{ \epsilon }(p)$  is contained inside $K \setminus \mathcal{L}(K)$. Note that,  for this reason, $K$ and  $\Phi^\epsilon_X(K)$  are not tangent at  $\Phi^\epsilon_X(p).$ Take a non-zero  tangent vector
$W \in T_{\Phi_X^\epsilon(p)}\Phi^\epsilon_X(K) \subset \xi_{\Phi_X^\epsilon(p)}$
that is normal to $K$. By the assumption above, we can find a contact isotopy of $\Phi^\epsilon_X(K)$ whose infinitesimal generator is equal to $W$ at $\Phi^\epsilon_X(p)$, and such that the generating contact Hamiltonian vanishes along the image of $\Phi^\epsilon_X(K)$ under the isotopy.

The latter contact isotopy displaces $\Phi^\epsilon_X(p)$ from some small neighborhood $V$ of $K,$ 
and the concatenation of contact isotopies thus displaces a small neighborhood $U$ of $p$ from $V$  as sought. 


{\bf 2.2 The case when $p \in \OP{int}\mathcal{L}(K)$:}

Finally,  since $K$ is connected,  any point in the  open Legendrian submanifold $\mathcal{L}(K) \setminus \OP{bd}\mathcal{L}(K)$ can be moved arbitrarily close to a point $p' \in  \OP{bd}\mathcal{L}(K)$ by a contact isotopy that fixes $K \setminus (\mathcal{L}(K) \setminus \OP{bd}\mathcal{L}(K))$ point-wise and $\mathcal{L}(K) \setminus \OP{bd}\mathcal{L}(K)$ set-wise. 
Note that  the Hamiltonian of such a contact isotopy can be taken to vanish on all of $K.$
(See \cite[Section 1]{DRS20} and the proof of \cite[Theorem 2.6.2]{Geiges2008}.)
We then apply the contact isotopy from case {\bf{2.1}} to $p'$ displacing its small neighborhood $U'$ from $V.$ This also displaces a smaller neighborhood $U \subset U'$ of $p$ from $V$ as sought. 

\end{proof}

\begin{rem}
\label{rem:NonLegDisp}
Global infinitesimal displaceability of non-Legendrians is an important ingredient in Nakamura's work \cite{Nakamura:C0}. The vanishing of the unparameterized Chekanov--Hofer--Shelukhin norm \cite{RosenZhang} implies that any closed non-Legendrian has a displacement that can be realized by a contact Hamiltonian that is arbitrarily $C^0$-small. To that end we use the fact that, for any non-Legendrian $K$, a generic Reeb vector field is nowhere tangent to $K$. Hence, the Reeb flow is a contact isotopy that displaces the non-Legendrian $K$. 
\end{rem}

\subsection{Proof of Corollary  \ref{cor:ParameterizedCHS}}

The parameterized pseudo-metric $\delta_\alpha$ is clearly degenerate on Legendrian submanifolds, since reparametrizing a Legendrian can be done by a contact Hamiltonian that vanishes along the Legendrian. See \cite[Section 1]{DRS20} and the proof of \cite[Theorem 2.6.2]{Geiges2008}. 
It is non-vanishing because $\delta^{\mbox{\tiny{un-p}}}_\alpha$ is non-degenerate \cite[Theorem 1.5]{DRS3}.
The pseudo-metric $\delta_\alpha$ vanishes identically for any non-Legendrian because of the first and third bullet points of Theorem \ref{thm:ZeroHamiltonian}.

\subsection{Proof of Theorem \ref{thm:NonLegPosLoop}}

%

Let $\rho:  M  \rightarrow [-1,0]$ be a smooth compactly supported bump function such that $\rho|_{ U } = -1$  for some ``large" (see below) neighborhood $U \supset K.$   
Apply Theorem \ref{thm:ZeroHamiltonian}, setting $K$  and $\Phi^t$ in Theorem \ref{thm:ZeroHamiltonian}
to be   $K$  in Theorem \ref{thm:NonLegPosLoop} and the  flow induced by the autonomous contact Hamiltonian $\epsilon\rho$, respectively. Note that this flow is equal to the negative Reeb flow on  $U$  rescaled by $\epsilon>0$, which is assumed to be small.
Theorem \ref{thm:ZeroHamiltonian} produces $\Psi^t$
by constructing $\Psi^t$ to be local to the image of the compact $K$ under the negative Reeb flow. So without loss of generality, we can assume $U$ is sufficiently large such that $\OP{supp}(\Psi^t) \subset U.$ 
We claim that $(\Phi^t)^{-1} \circ \Psi^t$ is the desired isotopy in Theorem \ref{thm:NonLegPosLoop} (which unfortunately is also called $\Phi^t$ in that theorem).

That $(\Phi^t)^{-1} \circ \Psi^t$ satisfies the first bullet point of Theorem \ref{thm:NonLegPosLoop} follows from the third bullet point of Theorem \ref{thm:ZeroHamiltonian}.

Note that $\Phi^t$ is generated by a Hamiltonian which is negative on $U$, and  thus  negative on the support of $\Psi^t.$  The second part of Lemma \ref{lem:inversehamiltonian}, setting $\Phi_1^t = \Phi^t,$ implies that $(\Phi^t)^{-1}$ is generated by a  Hamiltonian which is positive on $U,$ and  thus  positive on the support of $\Psi^t.$ The first part of Lemma \ref{lem:inversehamiltonian}, setting $\Phi_0^t = (\Phi^t)^{-1}, \Phi_1^t = \Psi^t,$ combined with the first bullet point of Theorem \ref{thm:ZeroHamiltonian} applied to $\Psi^t,$ implies that $(\Phi^t)^{-1} \circ \Psi^t$ satisfies the second bullet point of Theorem \ref{thm:NonLegPosLoop}.
(To see this, using the notation of Lemma \ref{lem:inversehamiltonian}, the Hamiltonian $G_t$ which generates $\Phi_0^t\circ \Phi_1^t,$ when restricted to  $\Phi_0^t \circ\Phi_1^t(K),$ is a sum of the positive term $H^0_t|_{\Phi_0^t\circ\Phi_1^t(K)}$ and $e^{f_t \circ (\Phi^t_0)^{-1}} H^1_t \circ (\Phi^t_0)^{-1}|_{\Phi_0^t\circ \Phi_1^t(K)}.$
But the second term vanishes because $H^1_t|_{\Phi_1^t(K)} =0.$)

\subsection{Proof of Theorem \ref{thm:C0}}

First we show that the general case when $K$ and $\Lambda$ are properly embedded, but not necessarily closed, can be deduced from the statment in the case when the involved submanifolds are assumed to be closed.

Recall that the sequence $\Phi_k$ of contactomorphisms all are assumed to have support inside some fixed compact subset. Take a compact domain $U \subset M$ with smooth boundary that contains the support of all contactomorphisms $\Phi_k$ in the sequence, which in particular means that $\Lambda=K$ holds in a neighborhood of $\overline{M \setminus U}$. For a generic choice of domain $U$, we may further assume that the intersection
$$B \coloneqq \Lambda \cap \partial U=K \cap \partial U$$
is transverse, yielding a smooth submanifold $B \subset \Lambda$ of codimension one. After deforming the neighborhood $U$ near $B$ we may further assume that there is a neighborhood $O$ of $\Lambda \cap U$ in $U$ that is contactomorphic to $J^1(\Lambda \cap U)$, under which $j^10$ is identified with $\Lambda \cap U$. 

Now produce an open contact manifold $\tilde{M}$ from $\OP{int} U$ in the following manner. Let $\tilde{\Lambda}$ denote the closed manifold obtained by gluing two disjoint copies of $\Lambda \cap U$ along its common bondary in the obvious manner. Clearly $J^1\tilde{\Lambda}$ contains $J^1(\Lambda \cap U)$ as a properly embedded submanifold with boundary. Hence we can glue $J^1\tilde{\Lambda}$ to $\OP{int} U$, resulting in an open contact manifold $\tilde{M}$ in which $\Lambda$ extends to a closed Legendrian $\tilde{\Lambda}$. Applying the statement to the closed Legendrian manifold $\tilde{\Lambda} \subset \tilde{M}$ we deduce that
$$\tilde{K} \coloneqq (K \cap U) \cup(\tilde{\Lambda} \setminus \OP{int}U)$$ is Legendrian, and hence so is the original submanifold $K$.

It remains to prove Theorem \ref{thm:C0} when $\Lambda$ and $K$ are closed, which we prove by contradiction. Suppose $K \subset M$ is a closed non-Legendrian submanifold that is the $C^0$-limit of the closed Legendrian submanifolds $\Phi_n(\Lambda)$ where $\Phi_n$ are contactomorphisms that $C^0$-converge to a homeomorphism: $\Phi_n\xrightarrow{C^0} \Phi_\infty.$
 
Let $\Phi^t$ be the contact isotopy from Theorem \ref{thm:NonLegPosLoop} generated by $H_t$ such  that $H_t|_{\Phi^t(K)}>0.$
Take a sufficiently small neighborhood $U \supset K$ contained inside $V$ provided by the theorem, so that $H_t|_{\Phi^t(U)}>0$ as well as $\Phi^1|_U=\id$ are satisfied. Since $\Phi_k(\Lambda) \subset U$ holds for all $k \gg 0$, we have produced a positive loop $\Phi^t|_{\Phi_k(\Lambda)}$ of a closed Legendrian submanifold.  This  contradicts Theorem \ref{thm:NoPositive}. Hence, $K$ is Legendrian. \qed

%
%

\subsection{Proof of Theorem \ref{thm:squeezing}}

  \begin{enumerate}
  \item Since $\Phi_\infty$ is a homeomorphism, $\Phi_\infty(U)$ is a neighborhood of $\Phi_\infty(\Lambda).$ 
  (Or see Remark \ref{rem:relax1}.) The $C^0$-convergence implies that $\Phi_k(U)$ is a neighborhood of $\Phi_\infty(\Lambda)$ for $k \gg 0$. 
 The  $C^0$-convergence  ensures that the fiber-wise rescaling inside $\Phi_k(U)$ projects $\Phi_\infty(\Lambda)$ onto $\Phi_k(\Lambda)$  with  degree 1, as required in this squeezing \cite[ Section 1.2 ]{DRS20}. Since $\Phi_k(\Lambda)$ also may be assumed to be contained inside a one-jet neighborhood of $\Phi_\infty(\Lambda)$, then there is also a  squeezing of $\Phi_k(\Lambda)$ into this one-jet neighborhood in the same sense (i.e.~the fiber-wise projection is of degree one). 
  \item
 \begin{enumerate}
 \item
 Recall that two maps from the same domain that are sufficiently $C^0$-close are homotopic; $\Phi_k|_\Lambda$ is thus homotopic to $\Phi_\infty|_\Lambda$ inside $\Phi_k(U)$ for $k \gg 0.$  Diffeomorphic and homotopic implies smoothly isotopic in high dimensions \cite{Plongements}. 
  \item
To show that $\Phi_\infty(\Lambda)$ is not loose in the one-jet neighborhood $\Phi_k(\Lambda),$  we claim that the zero-section in $J^1\Lambda$ cannot be squeezed  into the one-jet neighborhood of  a loose Legendrian, while (1) provides such a squeezing. To see that the zero-section cannot be squeezed into the one-jet neighborhood of  a loose Legendrian we argue as follows. After stabilizing the ambient contact manifold $(M,\alpha)$ to $(M \times T^*S^1,\alpha + p\,dq)$, the Legendrian $\Lambda$ to $\Lambda \times {\mathbf{0}_{T^*S^1}}$, and the contactomorphisms $\Phi_k$ to $\Phi_k \times \id_{T^*S^1}$, we may consider the case when $U \cong J^1 (\Lambda \times S^1)$ since we can stabilize the squeezing.
The  result follows from Lemma \ref{lem:displacement}.
 \end{enumerate}
    \item This follows directly from \cite[Theorem 1.7]{DRS20}.   
     (In \cite{DRS20}, the term ``stabilized" Legendrian is used in a completely different sense than $\Lambda \times  {\mathbf{0}_{T^*X}}$ as above. When $\dim(\Lambda)=1,$ in a local front projection, a neighborhood of a point of $\Lambda$ is replaced by a zig-zag. We use this zig-zag construction in the proof of Lemma \ref{lem:displacement} below.
 When $\dim(\Lambda) >1,$ stabilization is defined by a more general construction which Murphy proves  equivalent to the existence of a loose chart \cite{LooseLeg}.) 
    \end{enumerate}
    \qed
    
    \begin{lem}
      \label{lem:displacement}
    The zero-section in $J^1(\Lambda \times S^1)$ cannot be squeezed into the one-jet neighborhood of a loose Legendrian submanifold.
    \end{lem}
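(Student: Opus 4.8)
The plan is to argue by contradiction, manufacturing from a hypothetical squeezing a genuine positive loop of the zero-section, which Theorem \ref{thm:NoPositive} forbids because $\Lambda \times S^1$ is closed. So suppose the zero-section $Z \coloneqq 0_{\Lambda \times S^1}$ could be squeezed, with degree-one fiber-wise projection, into the one-jet neighborhood $\mathcal N(L) \cong J^1 L$ of a (closed) loose Legendrian $L \subset J^1(\Lambda \times S^1)$. First I would record the two inputs. From the squeezing \cite{DRS20} I would extract, using the fiber-wise rescaling $R_s \colon (q,p,z) \mapsto (q,sp,sz)$ on $J^1 L$ — which satisfies $R_s^*\alpha = s\,\alpha$ and is therefore a contact embedding — a Legendrian $R_s(Z)$ that is contactomorphic to $Z$ and lies $C^0$-arbitrarily close to $L$ for $s$ small. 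From looseness I would invoke Liu's theorem in the form of Corollary \ref{cor:Liu}: $L$ admits a contractible positive loop $\widetilde\phi^{\,t}$, with generating Hamiltonian $H_t>0$ along $\widetilde\phi^{\,t}(L)$. Crucially, since this loop is produced from the positive loop of a non-Legendrian push-off via Theorem \ref{thm:NonLegPosLoop} and the positivity-preserving squashing of \cite{DRS4} (a conjugation, cf.~Lemma \ref{lem:conjugation}), it can be arranged so that $\widetilde\phi^{\,0}=\mathrm{id}$ and $\widetilde\phi^{\,1}=\mathrm{id}$ hold on an entire neighborhood of $L$, not merely $\widetilde\phi^{\,1}(L)=L$ set-wise.

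The heart of the argument is to transfer this loop onto the nearby squeezed copy of the zero-section. The set $\{(t,x) : x \in \widetilde\phi^{\,t}(L)\}$ is compact and $H_t$ is continuous and strictly positive there, hence bounded below by a positive constant on a neighborhood of it; by uniform continuity of the family $\widetilde\phi^{\,t}$ over $[0,1]$, for $s$ small enough the path $\widetilde\phi^{\,t}(R_s(Z))$ remains inside this neighborhood, so $H_t>0$ along $\widetilde\phi^{\,t}(R_s(Z))$ for all $t$. Moreover, since $R_s(Z)$ lies in the neighborhood of $L$ on which $\widetilde\phi^{\,1}=\mathrm{id}$, we get $\widetilde\phi^{\,1}(R_s(Z)) = R_s(Z) = \widetilde\phi^{\,0}(R_s(Z))$, so $\widetilde\phi^{\,t}(R_s(Z))$ is a genuine positive loop of $R_s(Z)$. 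Conjugating by the contact embedding $R_s^{-1}$ (Lemma \ref{lem:conjugation}, whose conformal factor $s>0$ preserves the sign of the Hamiltonian) transports this to a compactly supported positive loop of the zero-section $Z$ inside $J^1(\Lambda \times S^1)$.

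This is the desired contradiction: $\Lambda \times S^1$ is a closed manifold, so by Theorem \ref{thm:NoPositive} the zero-section $0_{\Lambda \times S^1}$ admits no somewhere positive non-negative loop supported in a compact set, let alone a strictly positive one. Hence no such squeezing can exist, proving the lemma.

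I expect the main obstacle to be the transfer step, and specifically the need to arrange \emph{simultaneously} that (i) strict positivity survives the passage from $L$ to the $C^0$-nearby squeezed copy of $Z$, and (ii) the loop closes up exactly. Both hinge on choosing the positive loop of $L$ so that $\widetilde\phi^{\,1}$ is the identity on a full neighborhood of $L$ rather than merely preserving $L$ set-wise — which is precisely the control that the non-Legendrian push-off construction of Theorem \ref{thm:NonLegPosLoop}, together with the zig-zag/loose-chart flexibility underlying Corollary \ref{cor:Liu}, is designed to supply. A secondary point to verify carefully is that the positive loop, and hence its conjugate by $R_s^{-1}$, stays compactly supported inside $J^1(\Lambda \times S^1)$, as required to apply Theorem \ref{thm:NoPositive}.
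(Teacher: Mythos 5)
Your proposal is correct in outline, but it takes a genuinely different route from the paper. The paper's proof runs through \emph{horizontal displaceability}: it shows (i) via a Rabinowitz Floer complex computation that the zero-section of $J^1(\Lambda\times S^1)$ is \emph{not} horizontally displaceable, (ii) that the stabilized zero-section $j^1(0)\times Z_{\mathrm{st}}$ (zig-zag front in the $S^1$-factor) \emph{is} horizontally displaceable, and (iii) via an explicit formal-isotopy construction and Murphy's h-principle that every compact loose Legendrian can be contact-isotoped into a one-jet neighborhood of $j^1(0)\times Z_{\mathrm{st}}$; since horizontal displaceability is a $C^0$-open condition, a squeezed copy of the zero-section would inherit it, a contradiction. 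You replace both inputs: your rigidity input is Theorem \ref{thm:NoPositive} (Colin--Ferrand--Pushkar/Chernov--Nemirovski) instead of Rabinowitz Floer homology, and your flexibility input is a strengthened form of Corollary \ref{cor:Liu} --- a positive loop of the loose Legendrian whose time-one map is the identity on a full neighborhood --- instead of the h-principle placement near $Z_{\mathrm{st}}$. That strengthening is legitimate: it is exactly what the paper's own proof of Corollary \ref{cor:Liu} delivers, since Theorem \ref{thm:NonLegPosLoop} gives $\Phi^0|_V=\Phi^1|_V=\id_V$ on a neighborhood $V$ of the non-Legendrian push-off, and the squashing of \cite{DRS4} transfers this by conjugation. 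Your route is arguably more economical given the paper's toolkit (no Floer theory, no formal-isotopy/homotopy-lifting construction, and it shows the $S^1$-stabilization is not actually needed for this particular lemma); the paper's route keeps the lemma independent of \cite{DRS4} and of any refinement of Corollary \ref{cor:Liu} beyond its literal statement, at the cost of the Floer-theoretic and h-principle machinery.

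One step as written does not parse and needs the standard repair: ``conjugating by $R_s^{-1}$.'' The map $R_s^{-1}$ is defined only on $R_s(J^1L)$, and the loop $\widetilde\phi^{\,t}$ neither preserves $\mathcal{N}(L)$ nor keeps $\widetilde\phi^{\,t}(R_s(Z))$ inside the image of $R_s$ (the intermediate images $\widetilde\phi^{\,t}(L)$ may leave the jet neighborhood entirely), so $R_s^{-1}\circ\widetilde\phi^{\,t}\circ R_s$ is not a globally defined contact isotopy; moreover, even where defined it would return the loop only to the squeezed copy of the zero-section inside $\mathcal{N}(L)$, not to $0_{\Lambda\times S^1}$ itself. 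The fix: the squeezing isotopy followed by the path $s\mapsto R_s$ restricted to the compact squeezed zero-section is a Legendrian isotopy from $0_{\Lambda\times S^1}$ to $R_s(Z)$, which by the Legendrian isotopy extension theorem extends to a compactly supported ambient contactomorphism $\Theta$ of $J^1(\Lambda\times S^1)$ with $\Theta(0_{\Lambda\times S^1})=R_s(Z)$; conjugating your positive loop by $\Theta^{-1}$ via Lemma \ref{lem:conjugation} (whose positive conformal factor preserves positivity, compact support, and the identity-near-time-one property) yields the compactly supported positive loop of the genuine zero-section that contradicts Theorem \ref{thm:NoPositive}. With this substitution your argument is complete.
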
    
    
    \begin{proof}
 Let $\Pi: J^1 \Lambda \rightarrow   T^* \Lambda$ be the projection along the standard Reeb flow.
A Legendrian $\Lambda' \subset J^1 \Lambda$ is horizontally displaceable if there exists a contact isotopy that disjoins $\Lambda'$ from its image  under this  Reeb flow, $\Pi^{-1}(\Pi(\Lambda')).$ 
This is an open condition in the sense that if $\Lambda'$ is horizontally displaceable, then so too is any Legendrian that is sufficiently $C^0$-close to $\Lambda'.$

The Rabinowitz Floer complex (for example see \cite[Section 4]{DRS3}) of the zero-section $j^1(0)$ of $J^1\Lambda $ is not acyclic and its homology is invariant under contact isotopy.
Since the complex is generated by Reeb chords  between $j^1(0)$ and its image under the contact isotopy, this implies $j^1(0)$  is not horizontally displaceable.

      Let $Z_{\OP{st}} \subset J^1S^1$ denote the stabilized zero-section in $J^1S^1$ whose front is given by a single zig-zag.
This Legendrian is horizontally displaced when after the contact isotopy, the minimum magnitude of the slope of the zig-zag is greater than its initial zig-zag slope's maximum magnitude. 
 It follows that the stabilized zero-section $j^1(0) \times Z_{\OP{st}} \subset J^1(\Lambda \times S^1)$ also admits a horizontal displacement.

We claim that by  Murphy's h-principle \cite{LooseLeg}, any compact  
loose Legendrian $ \Lambda_0  \subset J^1(\Lambda \times S^1)$ can be placed   inside a one-jet neighborhood of $j^1(0) \times Z_{\OP{st}}$ by a contact isotopy.
 To see the isotopy, we construct a formal Legendrian isotopy between the loose Legendrian $\Lambda_0$ and a \emph{formal} Legendrian $\Lambda_1$ contained in the neighborhood of $j^1(0) \times Z_{st}$. The formal Legendrian isotopy is constructed by, first, fiber scaling $\Lambda_0$ towards the zero-section. Then, since $Z_{st} \subset J^1S^1$ is smoothly isotopic to $j^10$, it is now easy to see that there is a smooth isotopy $f_t:L \rightarrow J^1(\Lambda \times S^1)$ such that $f_i(L) =  \Lambda_i$ for $i=0,1$, where the smooth (not necessarily Legendrian) submanifold $\Lambda_1$ is contained inside the one-jet neighborhood of $j^10 \times Z_{st}$.  The formal isotopy $(g,G): [0,1]_t \times [0,1]_s \times L \rightarrow \left(J^1(\Lambda \times S^1), T(J^1(\Lambda \times S^1)) \right)$ is defined as follows: set $g_{t,s} = f_t,$ $G_{t,0} = df_t,$
$G_{0,s} = df_0;$  and then use homotopy-lifting to extend $G_{t,s}$ as a full-rank bundle map whose image is a Lagrangian in the contact planes for all $t\in[0,1]$ and $s=1$. The $0$-parametric version of Murphy's h-principle produces an actual loose Legendrian $\Lambda_1'$ in an arbitrarily small neighborhood of $\Lambda_1$. Since $\Lambda_1'$ is formally Legendrian isotopic to $\Lambda_0$ by construction, the $1$-parametric version of Murphy's h-principle produces the Legendrian isotopy that takes $\Lambda_0$ into the neighborhood, as sought.

Hence the loose Legendrians are all horizontally displaceable as well.
If  the zero-section of $J^1(\Lambda \times S^1)$ can be squeezed  into the one-jet neighborhood of some loose Legendrian, then by fiber-scaling it can be squeezed into an arbitrarily small one-jet neighborhood of the loose Legendrian. So the zero-section  is horizontally displaceable, contradicting its Rabinowitz Floer calculation.

      \end{proof}

\bibliographystyle{alpha}
\bibliography{references}

\end{document}